\theoremstyle{plain}
  \newtheorem{thm}{Theorem}[section]
  \newtheorem{prop}[thm]{Proposition}
  \newtheorem{lem}[thm]{Lemma}
  \newtheorem{cor}[thm]{Corollary}
\theoremstyle{definition}
  \newtheorem{dfn}[thm]{Definition}
  \newtheorem{exmp}[thm]{Example}
\theoremstyle{remark}
  \newtheorem{rem}[thm]{Remark}
\let\opn\operatorname %abbreviation of \operatorname
\let\term\emph
\def\@bothmode#1{\ifmmode #1\else $#1$\fi}
\def\@chCount#1{%
   \@tempchn=0
   \@tfor\member:=#1\do{\advance\@tempchn by 1}%
}
\def\@autopr#1{%
   \@chCount{#1}%
   \ifnum\@tempchn<2 #1\else (#1)\fi
}
\let\@tempopn\relax %temporary operator
\def\@opform_#1#2{\@tempopn_{#1}\@autopr{#2}}
\numberwithin{equation}{section}
\def\NN{\mathbb{N}} %the monoid of natural numbers with 0
\def\ZZ{\mathbb{Z}} %the ring of integers
\def\RR{\mathbb{R}} %the field of real numbers
\def\kk{\Bbbk} %base field
\def\m{\ideal{m}} %maximal ideal
\def\p{\ideal{p}} %prime ideal
\let\e\varepsilon %incidence function
\let\s\sigma %cell
\let\C\Sigma %conical complex
\let\t\tau %cell
\let\u\upsilon %cell
\let\i\iota %injection
\def\MM{\mathcal M} %monoidal complex
\def\M{\mathbf M} %affine monoid
\let\@tempar\relax %temporary arrow command
\def\@seton^#1{\overset{#1}{\@tempar}}
\def\defar#1#2{\@xp\def\csname #1\endcsname{\def\@tempar{#2}\@ifnextchar^{\@seton}{\@tempar}}}
\def\imply{\@bothmode{\Rightarrow}} % implication to the right
\def\Imply{\@bothmode{\Longrightarrow}} % long arrow version of \imply
\def\iff{\@bothmode\Longleftrightarrow} % equivalence
\def\get{\@bothmode{\Leftarrow}} % implication to the left
\def\Get{\@bothmode{\Longleftarrow}} % long arrow version of \get
\def\bra#1{[#1]} % square bracket
\def\mbra#1{\{ #1\}} % middle bracket
\def\set#1#2{\mbra{\,#1\mid #2\,}} %set
\let\Dsum\bigoplus %\big direct sum
\let\tns\otimes %small tensor product
\def\sM{|\MM |} %support of \M
\def\zM{\ZZ\MM} %ZM
\def\szM{|\ZZ\MM |} %support of \ZZ\M
\def\supp{\opn{supp}} %support
\def\cell{\mathcal X} %cell complex
\def\op{\mathsf{op}} %opposite
\def\sph{\mathbb S} %sphere
\def\defopn#1{%
    \@xp\def\csname #1\endcsname{%
        \def\@tempopn{\opn{\csname the#1\endcsname}}%
        \@ifnextchar_{\@opform}{\@opform_{}}%
    }%
}
\def\idmap{\opn{id}} %\identity map
\def\the@init{in}
\def\init{\@init_{\succ}}
\def\fring#1{\kk \bra{#1}} %face ring
\let\ideal\mathfrak %the font of ideals
\def\theE{E}
\def\E{\@ifstar{{}^*\theE}{\theE}} %injective hull
\let\defcat\defopn
\def\theMod{Mod}    \def\themod{mod}
\let\the@Lgr\theMod  \let\the@lgr\themod
\def\Lgr#1{\@Lgr_{\ZZ\MM}{#1}} %the category of \ZM-graded modules
\def\lgr#1{\@lgr_{\ZZ\MM}{#1}} %the category of f.g. \ZM-graded modules
\let\colimit\varinjlim %colimit (direct limit)
\def\theHom{Hom}    \def\theRHom{RHom}
\def\theExt{Ext}    
\def\theD{D}
\let\@tempgrop\underline
\def\Hom{\@ifstar{\opn{\@tempgrop\theHom}}{\opn\theHom}} %Hom functor
\def\RHom{\@ifstar{\opn{R\@tempgrop\theHom}}{\opn\theRHom}} %right derived functor of Hom
\def\Ext{\@ifstar{\opn{\@tempgrop\theExt}}{\opn\theExt}} %Ext functor
\def\uExt{\@ifstar{\opn{\@tempgrop\theuExt}}{\opn\theExt}} %Ext functor
\def\uExt{\underline{\operatorname{Ext}}}
\def\inHom{\Hom^\bullet}
\def\theDcat{{\mathsf D}}
\def\Db{\theDcat^b} %bounded derived category
\def\@G_#1{\Gamma_{#1}}
\def\G{\@ifnextchar_{\@G}{\@G_\m}} %local cohomology functor
\def\DD{\mathbb D} %duality functor on Sq
\def\theHtcat{{\mathsf K}}
\def\Cb{\theHtcat^b} %homotopy category
\def\cpx#1{#1^{\bullet}} %cochain complexes
\def\theD{D} 
\def\D{\@ifstar{{}^*\theD}{\theD}} %dualizing complexes
\def\<{{\langle}}
\def\>{{\rangle}}
\def\ts{t_\s}
\def\J{J^\bullet}
\def\DC{D^\bullet}
\def\L{L^\bullet}
\def\bM{\overline{\M}}
\def\bMM{\overline{\MM}}
\def\oR{\widetilde{R}}
\def\oP{\widetilde{\p}}
\def\fs{\fring{\s}}
\def\ofs{\overline{\fs}}
\title{Dualizing complex of a toric face ring II: non-normal case}
\author{Kohji Yanagawa}
\thanks{Partially supported by Grant-in-Aid for Scientific Research (c) (no.19540028).}
\address{Department of Mathematics, Kansai University,
Suita 564-8680, Japan}
\email{yanagawa@ipcku.kansai-u.ac.jp}
\subjclass[2000]{Primary 13F55; Secondary 13D25}
\begin{document}
%
%---title---
%
\maketitle
\begin{abstract}
The notion of {\it toric face rings} generalizes both 
Stanley-Reisner rings and affine  semigroup rings, and has been 
studied by Bruns, R\"omer, et.al.  
Here, we will show that, for a toric face ring $R$, 
the ``graded" Matlis dual of a C\v ech complex gives a dualizing complex. 
%Similar results are known for some classical cases.  
In the most general setting, $R$ is not a graded ring in the usual sense.  
Hence technical argument is required.
\end{abstract}

\section{Introduction}
Stanley-Reisner rings and affine semigroup rings are important subjects of 
combinatorial commutative algebra. 
The notion of {\it toric face rings}, which originated in an earlier work of 
Stanley \cite{St}, generalizes both of them, and has been 
studied by Bruns, R\"omer, and their coauthors recently (e.g. \cite{BG, BKR, IR}). 
Contrary to these classical examples,  
a toric face ring does not admit a nice multi-grading in its most general setting. 
To make a toric face ring $R$ from a finite regular cell complex $\cell$, we  
assign each cell $\s \in \cell$ an affine semigroup $\M_\s \subset \ZZ^{\dim \s +1}$ 
with $\ZZ\M_\s = \ZZ^{\dim \s +1}$ so that certain compatibility is satisfied, 
and ``glue" the affine semigroups $\kk[\M_\s]$ of $\M_\s$ along with $\cell$. 
(Note that not all $\cell$ can support toric face rings.)

In the previous paper (\cite{OY}), 
Okazaki and the author gave a concise description of a dualizing complex of 
$R$ under the assumption that $\kk[\M_\s]$ is normal for all $\s \in \cell$.  
In the present paper, we treat the general (i.e., non-normal) case. 
While the result in \cite{OY} does not hold verbatim, we can show that  
the ``Matlis dual"  $(\cpx L_R)^\vee$ of the C\v ech complex $\cpx L_R$
associated with the cell complex $\cell$ is quasi-isomorphic to the dualizing complex. 
If $R$ itself is an affine-semigroup ring, $(\cpx L_R)^\vee$ is the multigraded dualizing 
complex given by \cite{I}. More generally, 
if $R$ has a nice-multigrading, this fact was already proved by Ichim and R\"omer \cite{IR}. 
In their case, standard argument using the graded ring structure works,  but  
the general case requires much more technical argument.  
It would be an interesting problem to find another class of rings whose dualizing 
complexes are given by a similar way.  

\section{Notation and Preliminaries}
In this section, we recall the construction and basic properties of a toric face ring. 
See \cite{BKR,OY} for detail. We basically use the convention of \cite{OY}.    

\medskip

Let $\cell$ be a finite regular cell complex with the 
intersection property, and $X$ its underlying topological space. 
More precisely, $\cell$ is a finite set of subsets (called {\it cells}) of $X$ 
satisfying the following conditions.   
\begin{enumerate}
\item $\emptyset \in \cell $, 
$X = \bigcup_{\sigma \in \cell } \sigma$, and the cells $\sigma \in \cell $ are pairwise disjoint;
\item If $\emptyset \ne \sigma \in \cell $, then, for some $i \in \NN$, 
there exists a homeomorphism from an $i$-dimensional ball 
$\{ x \in \RR^i \mid ||x|| \leq 1 \}$ to the closure 
$\overline{\sigma}$ of $\sigma$ which maps $\{ x \in \RR^i \mid ||x|| < 1 \}$ onto $\sigma$; 
\item For $\sigma \in \cell$, the closure 
$\overline{\sigma}$ is the union of some cells in $\cell$; 
\item For $\s,\t \in \cell$, there is a cell $\u \in \cell$ such that 
$\overline \u = \overline \s \cap \overline \t$ (here $\u$ can be $\emptyset$).
\end{enumerate}  

A simplicial complex is a  typical example of our $\cell$. 
We regard $\cell$ as a partially ordered set ({\it poset} for short) by $\s \geq \t \stackrel{\text{def}}
{\Longleftrightarrow} \overline \s \supset \t$.

\begin{dfn}\label{sec:cell_cpx_ver}
A \term{conical complex} $(\C, \cell)$ on $\cell$ consists of the following data. 
\begin{enumerate}
\item $\C = \{ \, C_\s \mid \s \in \cell \, \}$, where 
$C_{\s} \subset \RR^{\dim \s + 1}$ is a polyhedral cone 
with $\dim C_{\s} = \dim \s + 1$. (In this paper, ``cone" means the one containing no line.)
\item An injection $\i_{\s,\t}:C_{\t} \to C_{\s}$ for $\s, \t \in \cell$ with 
$\s \geq \t$ satisfying the following. 
\begin{enumerate}
\item $\i_{\s,\t}$ can be lifted up to a linear map  
$\tilde{\i}_{\s,\t} :\RR^{\dim \t+1} \to \RR^{\dim \s +1}$. 
\item  The image $\i_{\s, \t}(C_{\t})$ is a face of $C_{\s}$.  
Conversely, for a face $C'$ of $C_\s$, there is a {\it sole} cell $\t$ with $\t \leq \s$
such that $\i_{\s,\t}(C_\t) = C'$. 
\item $\i_{\s, \s} = \idmap_{C_{\s}}$ and $\i_{\s,\t} \circ \i_{\t,\u}$ = $\i_{\s,\u}$ for $\s,\t,\u \in \cell$ with $\s \geq \t \geq \u$.
\end{enumerate}
\end{enumerate}
\end{dfn}

A polyhedral fan in $\RR^n$ gives a conical complex. 
In this case, as an underlying cell complex, we can take 
$\{ \, \text{the relative interior of $C \cap \sph^{n-1}$} \mid C \in \C \, \}$, where $\sph^{n-1}$ 
is the $(n-1)$-dimensional unit sphere in $\RR^n$, and the injections $\i$ are inclusion maps.

\begin{exmp}
Consider the following  cell decomposition of a M\"obius strip. 
\begin{figure}[h]\label{sec:Moebius}
$
\xy /r2.0pc/:,
{\xypolygon3"A"{~={75}~:{(-1,1.7)::}~>{}\bullet}},
+(.8,.8),
{\xypolygon3"B"{~={75}~:{(-1,1.7)::}~>{}\bullet}},
{"A1"\PATH~={**@{-}}'"A2"'"A3"'"B3"'"B2"'"B1"'"A1"},
"A2";"B2"**@{-},
{\vtwist~{"A1"}{"B1"}{"A3"}{"B3"}},
"A1"*+!RD{x}, "A2"*+!R{y}, "A3"*+!LU{z},
"B1"*+!RD{u}, "B2"*+!R{v}, "B3"*+!LU{w}
\endxy
$
%\caption{M\"obius strip}
\end{figure}
\noindent Regarding each rectangles as the cross-sections of 3-dimensional cones,
we have a conical complex that is not a fan (see \cite{BG}).
\end{exmp}

Let $\RR^{\dim \s+1}$ be the space containing $C_\s$, and 
$\ZZ^{\dim \s +1} \subset \RR^{\dim \s +1}$ the set of lattice points. 
Assume that $\tilde{\i}_{\s, \t}(\ZZ^{\dim \t +1}) \subset \ZZ^{\dim \s +1}$ 
for all $\s, \t \in \cell$ with $\s \geq \t$.

\begin{dfn}
A \term{monoidal complex} $\MM$ supported by a conical complex $(\C,\cell)$ is a set of monoids
$\mbra{\M_\s}_{\s \in \cell}$ with the following conditions:
\begin{enumerate}
\item $\M_\s \subset \ZZ^{\dim \s+1}$ for each $\s\in \cell$, 
and it is a finitely generated additive submonoid (so $\M_\s$ is an affine semigroup) with 
$\ZZ\M_\s = \ZZ^{\dim \s +1}$;
\item $\M_\s \subset C_\s$ and $\RR_{\geq 0} \M_\s = C_\s$ for each $\s\in \cell$; 
\item for $\s,\t \in \cell$ with $\s \ge \t$, the map $\i_{\s,\t}:C_\t \to C_\s$ induces an isomorphism
$\M_\t \cong \M_\s \cap \i_{\s,\t}(C_\t)$ of monoids.
\end{enumerate}
\end{dfn}

For example, let $\C$ be a rational fan in $\RR^n$. Then $\set{C \cap \ZZ^{n}} {C \in \C}$ gives 
a monoidal complex. More generally, taking a suitable submonoid of $C \cap \ZZ^{n}$ for each $C \in \C$, 
we get a monoidal complex whose monoids are not normal.

For a monoidal complex $\MM$, set
$$
\sM := \colimit_{\s \in \cell}\M_\s \quad \text{and} 
\quad \szM := \colimit_{\s \in \cell}\ZZ \M_\s,
$$
where the direct limits are taken with respect to  $\i_{\s, \t} : \M_\t \to \M_\s$ and
$\tilde{\i}_{\s, \t}:\ZZ \M_\t \to \ZZ \M_\s$ for $\s, \t \in \cell$ with $\s \ge \t$.
Note that $\sM$ (resp. $\szM$) is just a set and no longer a monoid (resp. abelian group) in general.  
Since all $\i_{\s,\t}$ (resp. $\tilde{\i}_{\s, \t}$) is injective, we can regard $\M_\s$ (resp. $\ZZ \M_\s$) 
as a subset of $\sM$ (resp. $\szM$). 
For example, if $\MM$ comes from a fan in $\RR^n$, then $\sM = \bigcup_{\s \in \cell} \M_\s \subset \ZZ^n$.

Let $a, b \in \szM$. If there is some $\s \in \cell$ with $a, b \in \ZZ \M_\s$, 
by our assumption on $\cell$, there is a unique minimal cell among 
these $\s$'s. Hence we can define $a \pm b \in \ZZ \M_\s \subset \szM$. 
If there is no $\s \in \cell$ with $a,b \in \ZZ \M_\s$, then $a \pm b$ do not exist.

\begin{dfn}[\cite{BKR}]\label{non-multi-graded }
Let $\MM$ be a monoidal complex, and $\kk$ a field. Then the $\kk$-vector space
$$
\fring \MM := \Dsum_{a \in \sM} \kk \, t^a,
$$
where $t$ is a variable, equipped with the following multiplication
$$
t^a \cdot t^b = \begin{cases}
t^{a+b} & \text{if $a+b$ exists,}\\
0       & \text{otherwise,}
\end{cases}
$$
has a $\kk$-algebra structure. 
We call $\fring \MM$ the \term{toric face ring} of $\MM$ over $\kk$.
\end{dfn}

Clearly,  $\dim \kk[\MM] = \dim \cell +1$. In the rest of this paper, we set $d:= \dim \kk[\MM]$.  
Stanley-Reisner rings and affine semigroup rings (of 
positive semigroups) can be established as toric face rings.
If $\MM$ comes from a fan in $\RR^n$, then $\kk[\MM]$ admits a $\ZZ^n$-grading 
with $\dim_\kk \kk[\MM]_a \leq 1$ for all $a \in \ZZ^n$. 
But this is not true in general.

\begin{exmp}[{\cite[Example 4.6]{BKR}}]\label{sec:Moebius_ring}
Consider the conical complex given in Example \ref{sec:Moebius}. 
Assigning  normal semigroup rings of the form $\kk[a,b,c,d]/(ac-bd)$ 
to the three rectangles, we have a toric face ring of the form 
$$\kk[x, y, z, u, v, w]/(xv - uy, vz - yw, 
xz - uw, uvw, uvz),$$
which does not admit a nice multi-grading. 
We can also get a similar example whose $\kk[\M_\s]$ are not normal. 
\end{exmp}

Let $R := \fring \MM$ be a toric face ring, and $\Mod R$ the category of $R$-modules. 

\begin{dfn}
$M \in \Mod R$ is said to be {\it $\zM$-graded} if the following are satisfied;
  \begin{enumerate}
  \item $M = \Dsum_{a \in \szM}M_a$ as $\kk$-vector spaces;
  \item Let $a \in \sM$ and $b \in \szM$. If $a+b$ exists,  then 
$t^a \, M_b \subset M_{a + b}$.  Otherwise, $t^a \, M_b = 0$.
  \end{enumerate}
\end{dfn}

Since  $R$ may not be a graded ring in the usual sense, the word ``$\zM$-graded" is 
abuse of terminology.
An ideal of $R$ is $\zM$-graded if and only if it is generated by 
monomials (i.e., elements of the form $t^a$).

Let $\Lgr R$ denote the subcategory of $\Mod R$ whose objects are $\zM$-graded
and morphisms are degree preserving (i.e., $f:M \to N$ with 
$f(M_a) \subset N_a$ for all $a \in \szM$). 
It is clear that $\Lgr R$ is an abelian category.

\medskip

For $\s \in \cell$,  a monomial ideal $\p_\s := (t^a \mid a \in \sM \setminus \M_\s)$ of $R$ 
is prime. In fact, the quotient ring $\kk[\s]:=R/\p_\s$ is isomorphic to the affine semigroup ring 
$\kk[\M_\s]$.  Conversely, any monomial prime ideal of $R$ is of the form $\p_\s$ for some $\s \in \cell$. 

We say $R$ is \term{cone-wise normal}, if $\kk[\s]$ is normal (equivalently, 
$\M_\s = C_\s \cap \ZZ^{\dim \s +1}$) 
for all $\s \in \cell$. Set $$I^{-i}_R := 
\Dsum_{\substack{\s \in \cell \\ \dim\kk[\s] = i}} \kk[\s]$$
for $i = 0, \dots , d$, and define $\partial:I^{-i}_R \to I^{-i+1}_R$
by
$$
\partial(x) = \sum_{\substack{\dim \fring{\t} =i-1 \\ \t \leq \s}}\e(\s,\t) \cdot 
f_{\t,\s}(x)
$$
for $x \in \kk[\s] \subset I^{-i}_R$,
where  $f_{\t,\s}$ is the natural surjection $\kk[\s] \to \kk[\t]$
(note that if $\t \leq \s$ then $\p_\s \subset \p_\t$) and 
$\e: \cell \times \cell \to \{ 0, \pm 1\}$ is an incidence function of $\cell$. Then
$$
\cpx I_R: 0 \longto I^{-d}_R \longto^\partial I^{-d + 1}_R \longto^\partial \cdots 
\longto^\partial I^0_R \longto 0
$$
is a complex. The following is the main result of \cite{OY}. Even if $R$ itself 
is an affine semigroup ring, this does not hold in the non-normal case.   

\begin{thm}[{\cite[Theorem~5.2]{OY}}]\label{sec:ishida}
If $R$ is cone-wise normal, then 
$\cpx I_R$ is quasi-isomorphic to the normalized dualizing complex $\cpx D_R$ of $R$.
\end{thm}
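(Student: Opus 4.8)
The plan is to deduce the quasi-isomorphism from graded local duality in the category $\Lgr R$, after reducing the statement to a computation of the local cohomology $\RGm(R)$ of $R$ at the graded maximal ideal $\m=(t^a\mid a\in\sM,\ a\ne 0)$. First I would fix the graded Matlis dual $(-)^\vee$ on $\Lgr R$, taken degreewise as $\Hom_\kk((-)_{-a},\kk)$, and observe that it is a perfect duality on the modules in sight: every $\kk[\s]$ has at most one-dimensional graded components, so the reflexivity $M\simeq M^{\vee\vee}$ holds for $M=\kk[\s]$ and hence for $\cpx I_R$. Since $R$ is a finitely generated $\kk$-algebra it has a normalized dualizing complex $\cpx D_R$, and graded local duality gives $\cpx D_R\simeq\RHom_R(R,\cpx D_R)\simeq\RGm(R)^\vee$. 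Thus it is enough to produce a quasi-isomorphism $(\cpx I_R)^\vee\simeq\RGm(R)$; dualizing and using reflexivity then yields $\cpx I_R\simeq\RGm(R)^\vee\simeq\cpx D_R$.

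Second, I would read off the structure of $(\cpx I_R)^\vee$. Because $(-)^\vee$ is exact and turns a quotient $\kk[\s]=R/\p_\s$ into a graded-injective $R$-module, $(\cpx I_R)^\vee$ is a bounded complex of graded injectives whose term in cohomological degree $i$ is $\bigoplus_{\dim\kk[\s]=i}(\kk[\s])^\vee$, with differentials dual to the signed surjections $\e(\s,\t)\,f_{\t,\s}$. In other words $(\cpx I_R)^\vee$ is the cellular chain complex of $\cell$ with the coefficient system $\s\mapsto(\kk[\s])^\vee$. I would then filter this complex by the dimension of the cells (equivalently by the poset $\cell$) and run the resulting spectral sequence, comparing it with the analogous Ishida/\v Cech presentation of $\RGm(R)$, in which the chart attached to a cone $C_\s$ contributes the localization of $R$ along the relative interior of $\M_\s$. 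The intersection property (condition (4) in the definition of $\cell$) is exactly what makes both the cellular differential and the \v Cech differential well defined and compatible with the poset structure.

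The heart of the argument, and the only place cone-wise normality is used, is the per-cell computation. For each $\s$ the ring $\kk[\s]=\kk[\M_\s]$ is a normal affine semigroup ring, hence Cohen--Macaulay by Hochster's theorem; therefore its local cohomology is concentrated in the single top degree and, in the one-cone case, the complex reduces to the classical description $\cpx D_{\kk[\s]}\simeq\omega_{\kk[\s]}[\dim\kk[\s]]$ with $\omega_{\kk[\s]}$ the ideal generated by the interior monomials of $\M_\s$. Feeding this concentration into the spectral sequence forces it to collapse, and the face-compatibility isomorphisms $\M_\t\cong\M_\s\cap\i_{\s,\t}(C_\t)$ together with the signs carried by the incidence function $\e$ identify the abutment with $H^\bullet_\m(R)$. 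This establishes $(\cpx I_R)^\vee\simeq\RGm(R)$ and completes the chain of quasi-isomorphisms.

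I expect the main obstacle to be foundational rather than combinatorial: since $\szM$ is only a set and $R$ is not a graded ring in the usual sense, none of the standard multigraded machinery (graded Matlis duality, the structure of graded injectives, graded local duality, and the convergence of the filtration spectral sequence) is available off the shelf, and each must be redeveloped inside $\Lgr R$. The second difficulty is globalization: the clean per-cone facts must be glued into a statement about $R$ cell by cell along $\cell$, and checking that the dualized \v Cech differentials match the maps $\e(\s,\t)\,f_{\t,\s}$ with correct signs requires careful bookkeeping. Cone-wise normality is precisely the hypothesis that concentrates every local contribution in one degree, which is what makes the comparison degenerate; in its absence the local cohomologies spread over several degrees and $\cpx I_R$ ceases to be the dualizing complex, as the remark after the statement indicates.
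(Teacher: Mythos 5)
First, a caveat about the comparison itself: the present paper does not prove this theorem. It is imported verbatim from \cite[Theorem~5.2]{OY} and used here as a black box, so there is no internal proof to measure your argument against. Judged on its own terms, your outline locates the genuine difficulty correctly but does not resolve it, and within the logical architecture of this paper it is circular.

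The decisive gap is your first step, the appeal to ``graded local duality'' in the form $\cpx D_R \simeq (\RGm R)^\vee$. Since $\cpx L_R$ is a C\v ech complex computing $\RGm(R)$, this assertion is exactly the statement that $\cpx J_R = (\cpx L_R)^\vee$ is quasi-isomorphic to $\cpx D_R$, i.e.\ Proposition~\ref{normal case} in the cone-wise normal case and Theorem~\ref{main} in general. But the paper's logic runs the other way: Proposition~\ref{normal case} is \emph{deduced from} Theorem~\ref{sec:ishida} by exhibiting $\cpx I_R$ as a quasi-isomorphic $\ZZ\MM$-graded subcomplex of $(\cpx L_R)^\vee$. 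There is no off-the-shelf graded local duality to invoke here: $R$ is not a graded ring in the usual sense ($\szM$ is only a set, not a group), cone-wise normality does not restore a global multigrading (the M\"obius example is cone-wise normal), and the modules $(\fring{\s})^\vee$ and $E_\s(\MM)$ are in general not injective over $R$, so one cannot verify the dualizing-complex axioms for $(\RGm R)^\vee$ directly either. Your closing paragraph concedes that graded Matlis duality, the structure of graded injectives, graded local duality, and the convergence of your filtration spectral sequence would all have to be ``redeveloped inside $\Lgr R$''; but that redevelopment \emph{is} the theorem---it is the content of \cite{OY} and of the remainder of this paper---so listing it as an anticipated obstacle does not discharge it. A secondary weakness: concentration of $H^\bullet_\m(\kk[\M_\s])$ in a single degree (Hochster) does not by itself collapse your spectral sequence, whose $E_1$ differentials carry the cellular topology of $\cell$; the comparison of $\cpx I_R$ with $(\cpx L_R)^\vee$ reported here from \cite{OY} is carried out degree-by-degree over $\szM$, where normality enters through the facial description of the sets $\M_\s - \MM$ rather than through Cohen--Macaulayness alone.
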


While the word ``dualizing complex" sometimes means its isomorphism class in the derived category,  
we use the convention that a dualizing complex 
$\cpx D_A$ of a noetherian ring $A$ is a complex of  injective $A$-modules.

\medskip

For $\s \in \cell$, set $T_\s := \set{t^a}{a \in \M_\s} \subset R$. 
Then $T_\s$ forms a multiplicatively closed subset consisting of monomials. 
Well, set 
$$
L^i_R := \Dsum_{\substack{\s \in \cell \\ \dim \s = i-1}}T_\s^{-1}R
$$
and  define $\partial:L^i_R \to L^{i+1}_R$
by
$$
\partial(x) = \sum_{\substack{\t \ge \s \\ \dim \t = i}} \e(\t,\s) \cdot g_{\t, \s}(x)
$$
for $x \in T^{-1}_\s R \subset L^i_R$,
where  
$g_{\t, \s}$ is a natural map $T^{-1}_\s R \to T^{-1}_\t R$
for $\s \le \t$.
Then $(\cpx L_R, \partial)$ forms a complex in $\Lgr R$:
$$
\cpx L_R: 0 \longto L^0_R \longto^\partial L^1_R \longto^\partial \cdots \longto^\partial 
L^d_R \longto 0. 
$$ 

We set $\m := (t^a \mid 0 \not= a \in \sM)$. This is a maximal ideal of $R$.

\begin{prop}[{cf. \cite[Theorem~4.2]{IR}}]
For $M \in \Mod R$, we have $H^i_\m(M) \cong H^i(M \tns_R \cpx L_R)$ for all $i$.
In other words, $\cpx L_R$ is a C\v ech complex of $R$ with respect $\m$.  
\end{prop}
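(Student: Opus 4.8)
The plan is to show that $\cpx L_R$ is a complex computing local cohomology with respect to $\m$, which amounts to identifying $\cpx L_R$ as a Čech-type complex built from the localizations $T_\s^{-1}R$. The key conceptual point is that $\m = (t^a \mid 0 \neq a \in \sM)$, and the only monomials \emph{not} in $\m$ are units of the various localizations $T_\s^{-1}R$; inverting $T_\s$ corresponds geometrically to passing to the open star of the cell $\s$.

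First I would reduce to the case $M = R$ by a standard argument: since $\cpx L_R$ is a bounded complex of flat $R$-modules (each $T_\s^{-1}R$ is a localization, hence flat), the functor $M \mapsto M \tns_R \cpx L_R$ preserves short exact sequences up to the long exact sequence in homology, and both $H^i_\m(-)$ and $H^i(-\tns_R \cpx L_R)$ are $\delta$-functors that agree in degree $0$ once we check exactness at the left end. Thus it suffices to exhibit a natural quasi-isomorphism realizing $\cpx L_R$ as a flat resolution-type gadget computing $\RGm$, i.e.\ to prove the statement for $M=R$ after verifying the augmentation.

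Second, the core computation is to check that $\cpx L_R$ is quasi-isomorphic to $\RGm(R)$, the derived sections with support in $\m$. The natural approach is the \v{C}ech philosophy: cover $\Spec R \setminus \{\m\}$ (the punctured spectrum) by the open sets $D(T_\s)$ where some monomial is inverted, one for each maximal cell $\s$, and observe that $\cpx L_R$ is precisely the associated \v{C}ech complex, with the $i$-th term $\Dsum_{\dim\s = i-1}T_\s^{-1}R$ indexed by $(i-1)$-cells and with differential given by the incidence function $\e$. Because $\cell$ has the intersection property, the combinatorial poset structure on $\cell$ matches the intersection pattern of these localizations: for $\s \le \t$ the natural map $g_{\t,\s}:T_\s^{-1}R \to T_\t^{-1}R$ records that inverting more monomials factors through inverting fewer, so the complex $\cpx L_R$ is literally the \v{C}ech nerve of this poset-indexed cover. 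One then invokes the standard identification (as in \cite[Theorem~4.2]{IR}) that such a \v{C}ech complex computes $H^i_\m$.

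The main obstacle I expect is that $R$ carries no honest multigrading, so I cannot simply quote the graded \v{C}ech machinery of \cite{IR} verbatim; instead the argument must be carried out degree by degree in the $\zM$-graded sense. Concretely, I would compute both sides one homogeneous component at a time: for each $a \in \szM$, the component $(\cpx L_R)_a$ becomes a complex of $\kk$-vector spaces whose terms are $(T_\s^{-1}R)_a$, and I must show this component is exact except where it detects the $\m$-support. The key lemma to establish is that, for fixed $a$, the set of cells $\s$ with $(T_\s^{-1}R)_a \neq 0$ forms a combinatorially contractible (or otherwise acyclic) subposet of $\cell$ unless $a$ lies in the ``socle'' region contributing to $H^i_\m$; this reduces the homological computation to the reduced simplicial (co)homology of an associated subcomplex of $\cell$, exactly the point where the regular-cell-complex hypotheses and the incidence function $\e$ do their work. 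Controlling this acyclicity uniformly across all $a \in \szM$, in the absence of a global grading, is the technical heart of the proof.
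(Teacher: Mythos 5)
The paper itself offers no argument for this proposition beyond the pointer to \cite[Theorem~4.2]{IR} and \cite[Proposition~3.2]{OY}, so the comparison is with the standard proof those references carry out. Your guiding philosophy (a \v Cech-type complex of localizations, with acyclicity of subcomplexes of $\cell$ as the combinatorial engine) is the right one, but the reduction step contains a genuine gap. Knowing that $H^i_\m(-)$ and $H^i(-\tns_R\cpx L_R)$ are $\delta$-functors agreeing in degree $0$, \emph{together with the statement for $M=R$}, does not yield the statement for all $M$: two $\delta$-functors agreeing in degree zero need not agree in higher degrees unless both are universal. (Concretely, a bounded complex of flat modules quasi-isomorphic to $\RGm(R)$ --- e.g.\ a flat resolution of the complex $\RGm(R)$ --- computes $H^i_\m(R)$ when tensored with $R$, but computes $\Tor$'s, not local cohomology, when tensored with a general $M$.) What has to be proved is effaceability of $H^i(-\tns_R\cpx L_R)$ for $i>0$, i.e.\ that $H^i(E\tns_R\cpx L_R)=0$ for every injective $R$-module $E$ and every $i>0$. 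This step is invisible in your outline, and it is where the real content lies: writing $E$ as a sum of indecomposables $E(R/\p)$ and using that $E(R/\p)\tns_R T_\s^{-1}R$ equals $E(R/\p)$ or $0$ according to whether $T_\s\cap\p=\none$ or not, the complex $E(R/\p)\tns_R\cpx L_R$ becomes $E(R/\p)$ tensored with the augmented cochain complex of the subcomplex $\set{\s\in\cell}{T_\s\cap\p=\none}$ of $\cell$, and one must show this subcomplex is acyclic whenever $\p\ne\m$. So an acyclicity lemma of the kind you anticipate is indeed the heart of the matter, but it must be established for the subcomplexes attached to \emph{primes} $\p$, not only for the degreewise supports attached to $a\in\szM$; your degree-by-degree computation makes sense only for $\zM$-graded $M$ (and really only settles $M=R$), whereas the proposition is asserted for arbitrary $M\in\Mod R$.

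Two smaller points. First, describing $\cpx L_R$ as ``the \v Cech nerve of a cover by $D(T_\s)$, one for each maximal cell'' conflates it with an honest \v Cech complex on a generating set: the terms of $\cpx L_R$ are indexed by single cells of every dimension (with $L^1_R$ indexed by the $0$-cells), not by subsets of a cover, and since a cell of a regular cell complex is not determined by its vertices, any comparison with the genuine \v Cech complex on monomial generators of $\m$ requires an argument beyond the intersection property. Second, even the degree-$0$ identification $H^0(M\tns_R\cpx L_R)=\G(M)$ is not automatic; it needs the radical computation showing that the monomials $t^a$ with $a\in\M_\s$, $\dim\s=0$, generate $\m$ up to radical, which in the non-normal case uses that each $\M_\s$ is a finitely generated positive affine semigroup. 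None of these steps is deep, but as written your proposal establishes the statement essentially only for $M=R$.
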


The proof  for the $\ZZ^n$-graded case given in \cite{IR} also 
works here. %(Moreover, the proof is essentially same as that of the corresponding statement for 
%an affine semigroup ring given in \cite{BH}. 
See \cite[Proposition~3.2]{OY} for detail.

Note that the localization $T_{\s}^{-1}R$ is a $\zM$-graded $R$-module, and 
$\cpx L_R$ is a  $\zM$-graded complex. In fact, if we set 
$$\MM - \M_\s := \{  \, a-b \mid a \in \M_\t, \, b \in \M_\s, \, \t \geq \s  \, \} \subset \szM$$
Then $$T_{\s}^{-1}R = \bigoplus_{c \in \MM - \M_\s} \kk \, t^c.$$

We can define the \term{($\zM$-graded) Matlis duality functor} 
$(-)^\vee: \Lgr R \to (\Lgr R)^\op$ as follows; 
Let $M \in \Lgr R$.  For $a \in \szM$ and $b \in \sM$ such that $a+b$ exists,  
$(M^\vee)_a$ is the $\kk$-dual space of $M_{-a}$,  
and the multiplication map $(M^\vee)_a \ni x \mapsto t^b x \in (M^\vee)_{a+b}$ is the $\kk$-dual 
of $M_{-a-b} \ni y \mapsto t^b y \in M_{-a}$.  

In \cite[Proposition~5.5]{OY}, we actually showed the following. 
%(In the notation there, we have $\DD(R) \cong \cpx I_R$, but $\cpx I_R$ is quasi-isomorphic to 
%the dualizing complex now.)

\begin{prop}\label{normal case}
If $R$ is cone-wise normal, then the Matlis dual $(\L_R)^\vee$ of $\cpx L_R$
is quasi-isomorphic to the normalized dualizing complex of $R$. 
\end{prop}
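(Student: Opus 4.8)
The plan is to reduce Proposition~\ref{normal case} to the already-established Theorem~\ref{sec:ishida} by comparing the two complexes $(\L_R)^\vee$ and $\cpx I_R$ directly, term by term and differential by differential. Since Theorem~\ref{sec:ishida} tells us that $\cpx I_R$ is quasi-isomorphic to the normalized dualizing complex $\cpx D_R$ in the cone-wise normal case, it suffices to produce a quasi-isomorphism (ideally an honest isomorphism of complexes in $\Lgr R$) between $(\L_R)^\vee$ and $\cpx I_R$. The natural starting point is to understand the individual terms. For each $\s \in \cell$, the localization $T_\s^{-1}R = \bigoplus_{c \in \MM - \M_\s} \kk\, t^c$ is $\zM$-graded, so its Matlis dual $(T_\s^{-1}R)^\vee$ is again $\zM$-graded with $((T_\s^{-1}R)^\vee)_a = ((T_\s^{-1}R)_{-a})^*$.

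First I would compute this graded dual explicitly. The point is that $(T_\s^{-1}R)^\vee$ should be identified, up to the $\kk$-duality bookkeeping, with the ``local cohomology-supported'' piece $\kk[\s]$ appearing in $\cpx I_R$; more precisely, when $R$ is cone-wise normal so that $\M_\s = C_\s \cap \ZZ^{\dim\s+1}$, the graded pieces of $(T_\s^{-1}R)^\vee$ in degrees lying in the interior of the appropriate cone should match those of $\kk[\s]$, via the classical self-duality of the canonical module of a normal affine semigroup ring (Danilov--Stanley). Taking the dual reverses the complex, so $(\L_R)^\vee$ lives in cohomological degrees $-d,\dots,0$, matching the indexing of $\cpx I_R$; and the term indexed by $\s$ with $\dim\s = i-1$ in $\L^i_R$ dualizes to a term in cohomological degree $-i$, where $\dim\kk[\s] = \dim\s + 1 = i$, which is exactly the index on $I^{-i}_R$. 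So the terms of the two complexes are indexed by the same cells in the same cohomological degrees.

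Next I would check that the differentials agree. The differential $\partial$ on $\cpx L_R$ is built from the natural localization maps $g_{\t,\s}\colon T_\s^{-1}R \to T_\t^{-1}R$ for $\s \le \t$, weighted by the incidence function $\e(\t,\s)$; dualizing sends $g_{\t,\s}^\vee$ to a map $(T_\t^{-1}R)^\vee \to (T_\s^{-1}R)^\vee$, and under the identification from the previous step this should become, up to sign, the natural surjection $f_{\s,\t}\colon \kk[\t] \to \kk[\s]$ (coming from $\p_\t \subset \p_\s$) used to define $\partial$ on $\cpx I_R$. The sign conventions for the incidence function are compatible because dualizing an incidence function of a regular cell complex yields (again up to a global sign or a standard reindexing) an incidence function, so the Koszul-type signs match. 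The main obstacle I anticipate is precisely this identification of the dual of a localized piece with the corresponding $\kk[\s]$: one must verify that the $\kk$-dual of $T_\s^{-1}R = \bigoplus_{c \in \MM - \M_\s}\kk\, t^c$, graded over the awkward set $\szM$ (which is neither a group nor embeds globally into a single lattice), really does recover $\kk[\s]$ with the correct grading, including checking that the module structure (not just the $\kk$-vector-space structure) is respected by the duality pairing.

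Finally, to avoid re-proving everything from scratch, I would lean on the fact (already recorded in the excerpt as established in \cite[Proposition~5.5]{OY}) and instead present the argument as: the identification of terms and differentials exhibits an isomorphism of complexes $(\L_R)^\vee \cong \cpx I_R$ in $\Lgr R$, whence by Theorem~\ref{sec:ishida} both are quasi-isomorphic to $\cpx D_R$. The delicate points to handle carefully, beyond the term-by-term duality computation, are: (i) that Matlis duality $(-)^\vee$ is exact on $\Lgr R$, so it takes a quasi-isomorphism to a quasi-isomorphism and in particular respects the cohomology computation; and (ii) the compatibility of the two incidence-function sign systems under dualization, which is a finite combinatorial check on the poset $\cell$. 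I expect (i) to be essentially formal from the definition of the graded dual (it is computed degree-by-degree as ordinary $\kk$-vector-space duality, which is exact), leaving the genuine content in the normal-semigroup duality identification of step two.
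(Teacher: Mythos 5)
There is a genuine gap: the central claim that ``the identification of terms and differentials exhibits an isomorphism of complexes $(\L_R)^\vee \cong \cpx I_R$'' is false, even in the cone-wise normal case. The term of $(\L_R)^\vee$ indexed by $\s$ is $(T_\s^{-1}R)^\vee = E_\s(\MM)$, whose $\kk$-basis sits in the degrees $\M_\s - \MM = \set{a-b}{a \in \M_\s,\ b\in \M_\t,\ \t \geq \s}$, which strictly contains $\M_\s$; for a maximal cell $\s$ this set is all of $\ZZ^{\dim \s+1}$ (since $\ZZ\M_\s=\ZZ^{\dim\s+1}$, every lattice point is a difference of two elements of $\M_\s$). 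By contrast the corresponding term $\kk[\s]$ of $\cpx I_R$ is supported only on the pointed semigroup $\M_\s$. Already for $R=\kk[x]$ one has $(T_\s^{-1}R)^\vee \cong \kk[x,x^{-1}]$ versus $\kk[\s]=\kk[x]$, so no degree-preserving isomorphism of $\zM$-graded modules exists, and the appeal to Danilov--Stanley does not repair this: that duality identifies the \emph{cohomology} $H^{-d}$ of the dualizing complex with the interior ideal, not the individual terms of the complex.

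The correct relationship --- and the one the paper actually uses, quoting the proof of \cite[Proposition~5.5]{OY} --- is that $\cpx I_R$ embeds as a proper $\zM$-graded \emph{subcomplex} of $(\L_R)^\vee$ and this inclusion is a quasi-isomorphism, i.e.\ the quotient complex is acyclic. Establishing that acyclicity is the substantive step (it is precisely where cone-wise normality is used, via the face structure of the normal cones $C_\s$), and your proposal never engages with it: once the term-by-term isomorphism is withdrawn, nothing in the argument shows that the extra graded pieces of the $E_\s(\MM)$ cancel in cohomology. Your peripheral points are fine --- the cohomological indexing does match, the dual of $g_{\t,\s}$ is compatible up to the incidence signs with $f_{\s,\t}$ on the subcomplex, and graded Matlis duality is exact --- but the proof as written does not close without either reproducing the acyclicity argument of \cite{OY} or citing it, which is exactly what the paper's two-line proof does.
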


\begin{proof}
Under the notation of \cite{OY}, we have $\L_R \cong {\mathbf R} \Gamma_\m R$ and 
$\cpx D_R = \DD(R)$. In the proof of \cite[Proposition~5.5]{OY}, it is shown 
that $\cpx I_R$ is a ($\ZZ \MM$-graded) subcomplex of $(\L_R)^\vee$, 
and they are quasi-isomorphic. Hence $(\cpx L_R)^\vee$ is quasi-isomorphic 
to $\cpx D_R$ by Theorem~\ref{sec:ishida}.
\end{proof}

Since $R$ is not a graded ring in the usual sense, the above result is not trivial. 
The purpose of this paper is to show that it also holds in the non-normal case. 

\section{Main Theorem and Proof}
Let the notation be as in the previous section, in particular, $R:= \kk[\MM]$ 
is the toric face ring of Krull dimension $d$.    

To describe the Matlis dual of the localization $T_\s^{-1} R$ 
for $\s \in \cell$ explicitly, set  
$$\M_\s - \MM:= \{  \, a -b \mid a \in \M_\s, \, b \in \M_\t, \, \t \geq \s  \, \} \subset \szM$$
For $c \in \M_\s -\MM$, let $\ts^c$ be a basis element with degree $c \in \szM$, and 
$$E_\s(\MM):= \bigoplus_{ c \in \M_\s - \MM } \kk \, \ts^c.$$
Then we can regard $E_\s(\MM)$ as a $\ZZ\MM$-graded $R$-module by 
$$
t^a \cdot \ts^c=
\begin{cases}
\ts^{a+c} & \text{if $a, c \in \ZZ \M_\t$ for some $\t \geq \s$ and $a+c \in \M_\s -\MM$,}\\
0 & \text{otherwise.}
\end{cases}
$$
In fact, $E_\s(\MM)$ is the Matlis dual of the localization $T_\s^{-1}R$.
As shown in the proof of \cite[Theorem~5.1]{IR}, 
if $\MM$ comes from a fan in $\RR^n$ (i.e., $R$ has a nice $\ZZ^n$-grading), 
$E_\s(\MM)$ is the injective envelope of $\kk[\s]$ in the 
category of $\ZZ^n$-graded $R$-modules.  

For $\s, \t \in \cell$ with $\s \geq \t$, 
$$
t_\s^a \mapsto 
\begin{cases}
t_\t^a & \text{if $a \in \M_\t -\MM,$}\\
0 & \text{otherwise}
\end{cases}
$$
gives  an $R$-homomorphism $E_\s(\MM) \to E_\t(\MM)$, 
which is the Matlis dual $g_{\s, \t}^\vee$ of the natural map 
$g_{\s, \t}: T_\t^{-1}R \to T_\s^{-1} R$. 

Hence the Matlis dual $\cpx J_R := (\cpx L_R)^\vee$ of $\cpx L_R$ has the following form. 
$$\J_R : 0 \to \bigoplus_{\dim \s = d-1} E_\s(\MM) \to 
\bigoplus_{\dim \s = d-2} E_\s(\MM) \to \cdots \to 
\bigoplus_{\dim \s = 0} E_\s(\MM) \to E_\emptyset(\MM) \to 0.$$
The differentials are give by  
$$E_\s(\MM) \ni 
x \mapsto \sum_{\substack{\dim \t = \dim \s-1 \\ \t \leq \s}}\e(\s,\t) \cdot g_{\s, \t}^\vee(x)
\in \bigoplus_{\dim \t = \dim \s -1} E_\t(\MM), $$
where $\e$ is the incidence function of $\cell$.
We put the cohomological degree of $\bigoplus_{\dim \s = i-1} E_\s(\MM)$ to $-i$.
The following is a main theorem of this paper. 

\begin{thm}\label{main}
The complex $\J_R$ is quasi-isomorphic to the normalized dualizing complex $\cpx D_R$ of $R$.  
\end{thm}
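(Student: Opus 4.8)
The plan is to reduce the non-normal case to the cone-wise normal case (Theorem~\ref{sec:ishida} and Proposition~\ref{normal case}), which is already available. The key idea is that every toric face ring $R = \kk[\MM]$ admits a natural companion ring built from the \emph{normalizations} of its constituent semigroups. Concretely, for each $\s \in \cell$ set $\bM_\s := C_\s \cap \ZZ^{\dim\s+1}$, the normalization of $\M_\s$; since the defining inclusions $\i_{\s,\t}$ identify faces, the data $\bMM := \{\bM_\s\}_{\s\in\cell}$ is again a monoidal complex supported by the \emph{same} conical complex $(\C,\cell)$, and $\bR := \kk[\bMM]$ is a cone-wise normal toric face ring containing $R$ as a $\zM$-graded subring. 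The first step is to verify this carefully: that the $\bM_\s$ glue correctly along $\cell$, that $R \hookrightarrow \bR$ is a finite, $\zM$-graded, injective ring map, and that the induced map $\szM \to \szM$ (which is the identity on the underlying set, since $\ZZ\M_\s = \ZZ\bM_\s = \ZZ^{\dim\s+1}$ for all $\s$) respects the grading.

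Next I would exploit functoriality of the whole construction in $\MM$. The C\v ech complex $\cpx L_R$, its Matlis dual $\J_R$, and the modules $E_\s(\MM)$ all depend on $\MM$ only through the combinatorial data of $\cell$ and the maps on localizations; the normalization inclusion $R \hookrightarrow \bR$ should induce a morphism of complexes $\J_R \to \cpx J_{\bR}$ (or in the reverse direction after applying the appropriate dual), compatible with the degreewise decomposition over $\cell$. By Proposition~\ref{normal case} we already know $\cpx J_{\bR} = (\cpx L_{\bR})^\vee$ is quasi-isomorphic to the normalized dualizing complex $\cpx D_{\bR}$ of $\bR$. The plan is then to relate $\cpx D_R$ to $\cpx D_{\bR}$ via the standard formula $\cpx D_R \simeq \RHom_{\bR}(R, \cpx D_{\bR})$, valid because $\bR$ is module-finite over $R$ and both rings have the same Krull dimension $d$. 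The crux is to show that applying $\Hom_{\bR}(R,-)$ (in the $\zM$-graded sense) to the explicit complex $\cpx J_{\bR}$ recovers exactly $\J_R$ up to quasi-isomorphism, i.e.\ that $\Hom^\bullet_{\bR}(R, E_\s(\bMM))$ is, degreewise, isomorphic to $E_\s(\MM)$ as a $\zM$-graded $R$-module.

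The main obstacle, which I expect to absorb most of the technical work, is precisely this last identification at the level of the injective-type modules $E_\s$. Because $R$ is not graded in the ordinary sense, one cannot simply invoke the classical theory of graded injective modules or graded Matlis duality over a polynomial ring; one must work directly with the degreewise $\kk$-dual description given in the excerpt. Explicitly, one needs to compute the graded $\kk$-vector space $\Hom_{\bR}(R, E_\s(\bMM))$ in each degree $c \in \szM$ and check that the answer is $\kk$ exactly when $c \in \M_\s - \MM$ and $0$ otherwise, with the $R$-action matching the prescribed multiplication $t^a \cdot \ts^c$. This amounts to showing that the $\zM$-graded piece $R_c$ is $\kk$ for $c$ in the monomial support of $R$ and that the duality pairing between $R$ and $\bR$ in each fixed degree is perfect on the relevant strata. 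Since the failure of normality changes only the monomial support (which lattice points $a$ carry $t^a$) and not the group $\szM$ or the cell structure, the discrepancy between $R$ and $\bR$ is controlled degree-by-degree over each $\t \geq \s$, and a careful bookkeeping over the poset $\cell$ of how these supports fit together should close the argument.

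Once this degreewise isomorphism $\J_R \cong \Hom^\bullet_{\bR}(R, \cpx J_{\bR})$ is established as $\zM$-graded complexes of $R$-modules, the theorem follows: the right-hand side computes $\RHom_{\bR}(R, \cpx D_{\bR}) \simeq \cpx D_R$ because each $E_\s(\bMM)$ is $\bR$-injective (so $\Hom_{\bR}(R,-)$ is exact on the terms and no higher $\Ext$ contributions arise), and the resulting $R$-modules $E_\s(\MM)$ assemble into a complex quasi-isomorphic to the normalized dualizing complex of $R$.
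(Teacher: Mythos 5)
Your starting point coincides with the paper's: form the cone-wise normalization $\oR := \kk[\bMM]$ with $\bM_\s = C_\s \cap \ZZ^{\dim\s+1}$, and try to descend from the known quasi-isomorphism $\cpx J_{\oR} \simeq \cpx D_{\oR}$. But the central reduction you propose does not exist. The formula you invoke, $\cpx D_R \simeq \RHom_{\oR}(R,\cpx D_{\oR})$, is backwards: the standard fact for a module-finite ring map is $\cpx D_{\oR} = \inHom_R(\oR,\cpx D_R)$, which computes the dualizing complex of the \emph{larger} ring from that of the smaller one. In your direction the expression is not even defined, because $R$ is a subring of $\oR$, not an $\oR$-module, so $\Hom_{\oR}(R,-)$ makes no sense; and there is no general device for recovering $\cpx D_R$ from $\cpx D_{\oR}$ by a single Hom. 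The same objection applies to the proposed identification $\J_R \cong \inHom_{\oR}(R,\cpx J_{\oR})$. Moreover, even the weaker hope of extracting $\cpx J_R$ termwise from $\cpx J_{\oR}$ ``as complexes'' fails for a reason the paper makes explicit: each $J^i_R$ is a $\zM$-graded direct summand of $J^i_{\oR}$, but $\cpx J_R$ is \emph{not} a subcomplex of $\cpx J_{\oR}$ --- the component-wise inclusion $\kappa$ does not commute with the differentials. Any argument that treats the comparison between the two complexes as a degreewise bookkeeping problem runs straight into this.

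What the paper actually does is use only the legitimate direction of duality. It takes the trace map $\lambda:\cpx D_{\oR}=\inHom_R(\oR,\cpx D_R)\to\cpx D_R$ (which is \emph{not} a quasi-isomorphism by itself), a quasi-isomorphism $\psi:\cpx J_{\oR}\to\cpx D_{\oR}$ chosen so that all induced maps $\inHom_{\oR}(\ofs,\psi)$ are quasi-isomorphisms, and the non-chain-map $\kappa:\cpx J_R\dashrightarrow\cpx J_{\oR}$; the key observation (Lemma~\ref{composition}) is that the composite $\phi=\lambda\circ\psi\circ\kappa$ \emph{is} a chain map, because every element of $J^i_R$ is killed by some $\p_\s$ and the composite restricts on $\cpx J_{\fring{\s}}$ to the honest chain map $\phi_\s$ of Lemma~\ref{commutative diagram}. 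Then, since $\lambda$ alone gives no control, $\phi$ is shown to be a quasi-isomorphism indirectly: it induces a natural transformation between the exact functors $\inHom_R(-,\cpx J_R)$ and $\inHom_R(-,\cpx D_R)$ on $\Cb(\InjSq)\simeq\Db(\Sq R)$, and by the way-out argument of \cite[Proposition 7.1]{Ha} it suffices to check the map on the indecomposable injectives $\fring{\s}$ of $\Sq R$, where it reduces to the single affine semigroup ring case (Ishida's theorem, Remark~\ref{Jfs}). Your proposal contains no substitute for either of these two steps --- the mechanism making the comparison map a chain map, and the reduction to the objects $\fring{\s}$ --- so as it stands the argument has a genuine gap at its core.
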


\begin{rem}\label{Jfs}
When $R$ itself is an affine semigroup ring, the above theorem was given by Ishida~\cite{I} (see also 
\cite{SS}). More precisely, for the semigroup ring $R:=\kk[\M]$ of an affine semigroup $\M \subset \ZZ^n$,  
$\cpx J_R$ is a $\ZZ^n$-graded normalized dualizing complex and quasi-isomorphic to the 
usual (i.e., non-graded) one. More generally, if $\MM$ comes from a fan, then 
the theorem was given by Ichim and R\"omer (\cite[Theorem~5.1]{IR}) by standard argument using the 
graded ring structure. % (c.f. \cite[Proposition~4.2]{SS}).  
\end{rem}

For $\s, \t  \in \cell$ with $\t \geq \s$, let $E_\s(\M_\t)$ be the $\ZZ \MM$-graded 
Matlis dual of the localization $T_\s^{-1}\kk[\t]$ of $\kk[\t]=R/\p_\t$. 
Since $\kk[\t]$ is a quotient of $R$, $E_\s(\M_\t)$ is a submodule of $E_\s(\MM)$ with 
a $\kk$-basis $\{ \,  t_\s^{a-b} \mid a \in \M_\s, \, b \in \M_\t \, \}.$

By construction, we have $\p_\t E_\s(\M_\t) =0$ and 
$E_\s(\M_\t)$ can be seen as a $\ZZ^{\dim \t +1}$-graded $\kk[\t]$-module. 
In this case, $E_\s(\M_\t)$ is the injective envelope of $\kk[\M_\s]$ in the 
category of $\ZZ^{\dim \t +1}$-graded $\kk[\M_\t]$-modules. 

\begin{lem}\label{Hom(s,J)} 
For $\s, \t \in \cell$, we have 
$$\Hom_{R}(\kk[\t], E_\s(\MM)) \cong  \begin{cases}
E_\s(\M_\t) \ ( \,  \cong (T_\s^{-1} \kk[\t])^\vee \, )& \text{if $\s \leq \t$,}\\
0 & \text{otherwise.}
\end{cases}$$
\end{lem}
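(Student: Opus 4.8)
The plan is to turn the Hom into an annihilator submodule and then read off its homogeneous pieces. First I would invoke the standard adjunction
$\Hom_R(\kk[\t], E_\s(\MM)) = \Hom_R(R/\p_\t, E_\s(\MM)) \cong (0 :_{E_\s(\MM)} \p_\t)$,
sending a homomorphism to the image of $1$. Since $\p_\t$ is a monomial (hence $\zM$-graded) ideal and $E_\s(\MM)$ is $\zM$-graded, this torsion submodule is $\zM$-graded, so it suffices to decide, for each basis vector $\ts^c$ with $c \in \M_\s - \MM$, whether $t^a\,\ts^c = 0$ for every $a \in \sM \setminus \M_\t$; these $t^a$ are exactly the monomial generators of $\p_\t$.

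The computational heart is a single multiplication. Fix $c \in \M_\s - \MM$ and write $c = a_0 - b_0$ with $a_0 \in \M_\s$ and $b_0 \in \M_\rho$ for some $\rho \geq \s$. Then $a_0, b_0, c$ all lie in $\ZZ\M_\rho$ (using $\M_\s \hookrightarrow \M_\rho$) and $b_0 + c = a_0 \in \M_\s \subseteq \M_\s - \MM$, so by the multiplication rule $t^{b_0}\,\ts^c = \ts^{a_0} \neq 0$. Hence, if $\ts^c$ is $\p_\t$-torsion, the monomial $t^{b_0}$ cannot lie in $\p_\t$, which forces $b_0 \in \M_\t$; together with $a_0 \in \M_\s$ this gives $c = a_0 - b_0 \in \M_\s - \M_\t$, i.e. $\ts^c \in E_\s(\M_\t)$. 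This yields $(0 :_{E_\s(\MM)} \p_\t) \subseteq E_\s(\M_\t)$, and the reverse inclusion is immediate from the already-noted identity $\p_\t\, E_\s(\M_\t) = 0$. So when $\s \leq \t$ we obtain $(0 :_{E_\s(\MM)} \p_\t) = E_\s(\M_\t) \cong (T_\s^{-1}\kk[\t])^\vee$, as claimed.

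It remains to treat $\s \not\leq \t$ and show the torsion submodule vanishes. Here I would sharpen the choice of representative: replacing $(a_0, b_0)$ by $(a_0 + m, b_0 + m)$ for a lattice point $m \in \M_\s \cap \relint C_\s$, I may assume $a_0 \in \relint C_\s$. Running the same computation with $a = a_0$ gives $t^{a_0}\,\ts^c = \ts^{2a_0 - b_0} \neq 0$, so $\p_\t$-torsion of $\ts^c$ now also forces $a_0 \in \M_\t$. Thus $a_0 \in \M_\s \cap \M_\t$ with $a_0 \in \relint C_\s$.

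The step I expect to be the main obstacle is converting this last membership into the order relation $\s \leq \t$. The clean way is the structural fact, built into the conical-complex axioms, that each $x \in \sM$ lies in the relative interior of the cone $C_\nu$ of a unique cell $\nu$, which is then the minimal cell with $x \in \M_\nu$, and that $x \in \M_\rho$ holds precisely when $\rho \geq \nu$; this follows from the bijection in Definition~\ref{sec:cell_cpx_ver}(2b) between the faces of $C_\t$ and the cells $\leq \t$, together with condition~(3) in the definition of a monoidal complex. Applied to $x = a_0$, the relation $a_0 \in \relint C_\s \cap \M_\s$ identifies this cell as $\nu = \s$, and then $a_0 \in \M_\t$ forces $\s \leq \t$, contradicting $\s \not\leq \t$. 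Hence no basis vector is $\p_\t$-torsion and $\Hom_R(\kk[\t], E_\s(\MM)) = 0$. (Equivalently, one checks directly that $T_\s^{-1}\kk[\t] = 0$ when $\s \not\leq \t$, since some $t^a$ with $a \in \M_\s \cap \relint C_\s$ maps to $0$ in $\kk[\t]$.)
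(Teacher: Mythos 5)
Your proof is correct and follows essentially the same route as the paper's: both reduce to deciding which graded basis vectors of $E_\s(\MM)$ are killed by $\p_\t$, and both hinge on the computation $t^{b}\cdot t_\s^{a-b}=t_\s^{a}\ne 0$ (for the inclusion $(0:_{E_\s(\MM)}\p_\t)\subset E_\s(\M_\t)$) together with exhibiting an element of $\M_\s\setminus\M_\t$ acting injectively when $\s\not\leq\t$. The only difference is presentational: the paper simply asserts that $\s\not\leq\t$ yields some $a\in\M_\s\setminus\M_\t$, whereas you justify this via an element of $\M_\s\cap\relint C_\s$ and the support argument --- a harmless (indeed welcome) elaboration of the same step.
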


\begin{proof}
Assume that $\sigma \not \leq \t$. Then we can take $a \in \M_\s \setminus \M_\t$. 
Clearly, if $b \in \M_\s -\MM$, then $a+b \in \M_\s -\MM$, that is, 
$t^a \cdot t_\sigma^b \ne 0$ in $E_\s(\MM)$. Since $t^a \in \p_\t$, we have  
$\Hom_{R}(\kk[\t], E_\s(\MM)) = 0$. 

Next, assume that  $\sigma \leq \t$. The inclusion 
$$\Hom_{R}(\kk[\t], E_\s(\MM)) \cong \{ \, x \in E_\s(\MM) \mid \p_\t x = 0 \, \}
\supset E_\s(\M_\t)$$ 
is clear. 
To show the opposite inclusion, take $x \in E_\s(\MM) \setminus E_\s(\M_\t)$. 
We may assume that $x = t_\s^{a-b}$ for some $a \in \M_\s$ and $b \not \in \M_\t$. 
Then $t^b \in \p_\t$ and $t^b \cdot t_\s^{a-b} = t_\s^a \ne 0$. 
\end{proof}

The next result easily follows from the above discussion (and Remark~\ref{Jfs}). 

\begin{lem}
For $\s \in \cell$, the complex $\inHom_R(\fring{\s}, \cpx J_R)$ is isomorphic to $\cpx J_{\fring{\s}}$, 
and quasi-isomorphic to the dualizing complex 
$\cpx D_{\fring{\s}} = \inHom_R(\fring{\s}, \cpx D_R)$ of $\fring{\s}$.  
\end{lem}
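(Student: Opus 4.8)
The plan is to compute the complex $\inHom_R(\fring{\s}, \cpx J_R)$ term by term using Lemma~\ref{Hom(s,J)}, and then recognize the resulting complex as $\cpx J_{\fring{\s}}$. First I would apply $\Hom_R(\fring{\s}, -)$ to each term $\bigoplus_{\dim \t = i-1} E_\t(\MM)$ of $\cpx J_R$. By Lemma~\ref{Hom(s,J)}, the functor $\Hom_R(\fring{\s}, -)$ kills each summand $E_\t(\MM)$ unless $\s \leq \t$, in which case it returns $E_\t(\M_\s)$. (Note the role reversal: the cell $\s$ is now fixed as the quotient, and $\t$ ranges over the cells with $\t \geq \s$.) Since $\inHom_R$ is taken degreewise, the $(-i)$-term of $\inHom_R(\fring{\s}, \cpx J_R)$ becomes $\bigoplus_{\dim \t = i-1,\, \t \geq \s} E_\t(\M_\s)$. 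This is exactly the term of the complex $\cpx J_{\fring{\s}}$ built from the semigroup ring $\fring{\s} = \kk[\M_\s]$, once we observe that the cells $\t \geq \s$ of $\cell$ correspond precisely to the faces of the cone $C_\s$, i.e.\ to the cells of the conical complex supporting the affine semigroup ring $\kk[\M_\s]$.

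Next I would check that the differentials match. The differential of $\cpx J_R$ is assembled from the Matlis-dualized maps $g_{\s,\t}^\vee$ weighted by the incidence function $\e$ of $\cell$. Applying $\Hom_R(\fring{\s}, -)$ is exact on the relevant maps (it simply restricts each $g^\vee$ to the submodules fixed by $\p_\s$), so the induced differential on $\inHom_R(\fring{\s}, \cpx J_R)$ is again given by the incidence function $\e$ restricted to the interval $[\s, \cdot]$ in $\cell$. Because $\fring{\s} = \kk[\M_\s]$ is an affine semigroup ring, its own complex $\cpx J_{\fring{\s}}$ is built from exactly these data, and an incidence function of the face lattice of $C_\s$ is just the restriction of $\e$. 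This yields the claimed isomorphism $\inHom_R(\fring{\s}, \cpx J_R) \cong \cpx J_{\fring{\s}}$.

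Finally, the quasi-isomorphism statement follows from Remark~\ref{Jfs}: since $\fring{\s} = \kk[\M_\s]$ is an affine semigroup ring, the complex $\cpx J_{\fring{\s}}$ is by Ishida's theorem (quoted in Remark~\ref{Jfs}) quasi-isomorphic to the normalized dualizing complex $\cpx D_{\fring{\s}}$ of $\fring{\s}$. The identification $\cpx D_{\fring{\s}} = \inHom_R(\fring{\s}, \cpx D_R)$ is the standard fact that applying $\inHom_R(\fring{\s}, -)$ to a dualizing complex of $R$ produces a dualizing complex of the quotient $\fring{\s}$, valid because $\cpx J_R$ and hence $\cpx D_R$ consists of injective $R$-modules.

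The step I expect to require the most care is the bookkeeping of the degrees and the indexing of summands: one must verify that $E_\t(\M_\s)$ as computed inside the ambient ring $R$ genuinely coincides, as a $\ZZ^{\dim\s+1}$-graded $\kk[\M_\s]$-module, with the corresponding injective term appearing in $\cpx J_{\fring{\s}}$, and that the cohomological placement (degree $-i$ for $\dim\t = i-1$) is consistent between the two descriptions. This is where the non-normality and the absence of a global grading could, in principle, cause a mismatch, so I would track the gradings explicitly through the identification $\M_\s - \MM$ versus $\M_\s - \M_\t$ rather than treating it as automatic.
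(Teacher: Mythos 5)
Your overall strategy --- compute $\inHom_R(\fring{\s}, \cpx J_R)$ termwise via Lemma~\ref{Hom(s,J)}, identify the result with $\cpx J_{\fring{\s}}$, and then invoke Ishida's theorem through Remark~\ref{Jfs} for the quasi-isomorphism with $\cpx D_{\fring{\s}} = \inHom_R(\fring{\s},\cpx D_R)$ --- is exactly the argument the paper intends (the paper itself offers no written proof beyond ``follows from the above discussion and Remark~\ref{Jfs}''). However, you have reversed the partial order in two places, and the two reversals happen to cancel. In the paper's convention $\t \leq \s$ means $\overline{\s} \supset \t$, and Lemma~\ref{Hom(s,J)} says that $\Hom_R(\kk[\t], E_\s(\MM))$ is nonzero precisely when $\s \leq \t$, i.e.\ when the cell indexing the \emph{module} lies below the cell indexing the \emph{quotient}. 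Applying this with $\kk[\s]$ as the quotient and $E_\t(\MM)$ as the module, the surviving summands are those with $\t \leq \s$, not $\t \geq \s$ as you write; indeed for $\t \not\leq \s$ one has $T_\t^{-1}\kk[\s]=0$, and the paper only defines $E_\t(\M_\s)$ when $\t \leq \s$. Correspondingly, your assertion that ``the cells $\t \geq \s$ of $\cell$ correspond precisely to the faces of the cone $C_\s$'' is also backwards: by Definition~\ref{sec:cell_cpx_ver}(2)(b) the faces of $C_\s$ are in bijection with the cells $\t \leq \s$. With both inequalities corrected, the identification
$$\inHom_R(\fring{\s},\cpx J_R)^{-i} \;\cong\; \bigoplus_{\substack{\dim\t = i-1 \\ \t\leq\s}} (T_\t^{-1}\kk[\s])^\vee \;=\; J^{-i}_{\fring{\s}}$$
and the compatibility of the differentials (both governed by the incidence function restricted to the cells below $\s$) go through exactly as you describe, and the appeal to Remark~\ref{Jfs} finishes the proof.

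One smaller point: you justify $\cpx D_{\fring{\s}} = \inHom_R(\fring{\s},\cpx D_R)$ by saying it is ``valid because $\cpx J_R$ and hence $\cpx D_R$ consists of injective $R$-modules.'' The terms $E_\t(\MM)$ of $\cpx J_R$ are \emph{not} injective objects of $\Mod R$ (they are injective only in a suitable graded sense), and the injectivity of the terms of $\cpx D_R$ is part of the paper's standing convention on dualizing complexes, not a consequence of anything about $\cpx J_R$. The correct justification is the one the paper uses for $\oR$: since $\fring{\s}$ is a finitely generated $R$-module and $\cpx D_R$ is a bounded complex of injectives, $\inHom_R(\fring{\s},\cpx D_R)$ represents $\RHom_R(\fring{\s},\cpx D_R)$, which is a normalized dualizing complex of the quotient ring $\fring{\s}$.
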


For each $\s \in \cell$, set $\bM_\s:= \ZZ^{\dim \s +1} \cap C_\s$. 
Then $\bMM := \{ \, \bM_\s \, \}_{\s \in \cell}$ is a monoidal complex supported by $\cell$ again. 
Let $\oR := \kk[\bMM]$ be the toric face ring of $\bMM$. 
For the monomial prime ideal $\oP_\s$ of $\oR$ associated with $\s \in \cell$, 
we have $\oR/\oP_\s \cong \kk[\bM_\s]$ and this is the normalization $\overline{\fring{\s}}$ 
of $\fring{\s}$. So we denote $\oR/\oP_\s$ by $\overline{\fring{\s}}$. 
%Therefore, we call the toric face ring $\oR := \kk[\bMM]$ of $\bMM$ the 
%{\it cone-wise normalization} of $R$. 
Since $\oR$ is cone-wise normal, $\cpx J_{\oR}$ is quasi-isomorphic to $\cpx D_{\oR}$ 
by Theorem~\ref{sec:ishida}. 
Moreover, we have the following.

\begin{lem}
There is a quasi-isomorphism $\psi: \cpx J_{\oR} \to \cpx D_{\oR}$ such that the 
induced map $\psi_\s := \inHom_{\oR}(\overline{\fring{\s}}, \psi) : \cpx J_{\overline{\fring{\s}}} \to 
\cpx D_{\overline{\fring{\s}}}$ is a quasi-isomorphism for all $\s \in \cell$.  
\end{lem}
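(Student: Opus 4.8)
The plan is to produce $\psi$ not by guessing a map $\cpx J_{\oR}\to\cpx D_{\oR}$ directly, but by routing through the Ishida complex $\cpx I_{\oR}$, which mediates between $\cpx J_{\oR}$ and $\cpx D_{\oR}$ and, crucially, behaves transparently under $\inHom_{\oR}(\overline{\fring{\s}},-)$. Since $\oR$ is cone-wise normal, Theorem~\ref{sec:ishida} gives a quasi-isomorphism $\phi\colon \cpx I_{\oR}\to\cpx D_{\oR}$, and the proof of Proposition~\ref{normal case} realizes $\cpx I_{\oR}$ as a subcomplex of $\cpx J_{\oR}$ for which the inclusion $\lambda\colon \cpx I_{\oR}\hookrightarrow\cpx J_{\oR}$ is a quasi-isomorphism. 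Because $\cpx D_{\oR}$ is, by our convention, a bounded complex of injectives, the map $\phi$ extends along the quasi-isomorphism $\lambda$: there is a chain map $\psi\colon \cpx J_{\oR}\to\cpx D_{\oR}$, automatically a quasi-isomorphism, with $\psi\circ\lambda$ homotopic to $\phi$. This $\psi$ is the candidate.

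\textbf{Applying $\inHom_{\oR}(\overline{\fring{\s}},-)$.} Fix $\s\in\cell$ and apply the additive functor $\inHom_{\oR}(\overline{\fring{\s}},-)$, which preserves chain homotopies, to the triangle formed by $\lambda$, $\phi$, $\psi$. Writing $\lambda_\s,\phi_\s,\psi_\s$ for the induced maps, I get $\psi_\s\circ\lambda_\s\simeq\phi_\s$. The source and target of $\psi_\s$ are identified with $\cpx J_{\overline{\fring{\s}}}$ and $\cpx D_{\overline{\fring{\s}}}$ by the Lemma preceding this one, applied to $\oR$. The remaining identification $\inHom_{\oR}(\overline{\fring{\s}},\cpx I_{\oR})\cong\cpx I_{\overline{\fring{\s}}}$ I would prove exactly as in Lemma~\ref{Hom(s,J)}: each term $\overline{\fring{\t}}=\oR/\oP_\t$ of $\cpx I_{\oR}$ is a domain, so
$$
\Hom_{\oR}(\overline{\fring{\s}},\overline{\fring{\t}})=(0:_{\overline{\fring{\t}}}\oP_\s)=
\begin{cases}\overline{\fring{\t}} & \text{if }\oP_\s\subseteq\oP_\t\ (\text{i.e. }\t\leq\s),\\ 0 & \text{otherwise.}\end{cases}
$$
Thus $\inHom_{\oR}(\overline{\fring{\s}},\cpx I_{\oR})$ is the subcomplex of $\cpx I_{\oR}$ spanned by the faces $\t\leq\s$ of $\s$, with the inherited incidence differential, which is precisely the Ishida complex $\cpx I_{\overline{\fring{\s}}}$ of the normal semigroup ring $\overline{\fring{\s}}=\kk[\bM_\s]$.

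\textbf{Conclusion.} Under these identifications the triangle becomes the corresponding one for the ring $\overline{\fring{\s}}$: the map $\lambda_\s$ is the inclusion $\cpx I_{\overline{\fring{\s}}}\hookrightarrow\cpx J_{\overline{\fring{\s}}}$ of Proposition~\ref{normal case}, and $\phi_\s$ is the Ishida quasi-isomorphism $\cpx I_{\overline{\fring{\s}}}\to\cpx D_{\overline{\fring{\s}}}$ of Theorem~\ref{sec:ishida}. As $\overline{\fring{\s}}$ is a normal (hence cone-wise normal) affine semigroup ring, both $\lambda_\s$ and $\phi_\s$ are quasi-isomorphisms. Combined with $\psi_\s\circ\lambda_\s\simeq\phi_\s$, the two-out-of-three property forces $\psi_\s$ to be a quasi-isomorphism for every $\s\in\cell$, which is the assertion.

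\textbf{Main obstacle.} The delicate point is the naturality claim of the middle step: that $\inHom_{\oR}(\overline{\fring{\s}},-)$ carries the canonical maps $\lambda$ and $\phi$ for $\oR$ onto the canonical maps $\lambda_\s$ and $\phi_\s$ for $\overline{\fring{\s}}$. This is not formal --- a careless $\psi$ need not induce quasi-isomorphisms after $\inHom_{\oR}(\overline{\fring{\s}},-)$, which is exactly why the Ishida complex is inserted --- and verifying it requires tracing the explicit construction of the quasi-isomorphism of Theorem~\ref{sec:ishida} in \cite{OY} and checking its compatibility with the restriction maps $\oR/\oP_\s\to\oR/\oP_\t$ that define the differentials of $\cpx I_{\oR}$. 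The accompanying graded-versus-ungraded bookkeeping for $\cpx D_{\oR}$ --- which must be simultaneously a complex of injectives and carry the $\ZZ\MM$-grading used by $\inHom$ --- is the other technical point to watch.
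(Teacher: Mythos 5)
Your proposal is correct and follows essentially the same route as the paper's proof: both insert the Ishida complex $\cpx I_{\oR}$ as a mediator, use the injectivity of the terms of $\cpx D_{\oR}$ to lift the roof $\cpx J_{\oR}\hookleftarrow\cpx I_{\oR}\to\cpx D_{\oR}$ to an honest chain map $\psi$ with $\psi\circ\iota$ homotopic to the Ishida quasi-isomorphism, apply $\inHom_{\oR}(\overline{\fring{\s}},-)$ (which preserves homotopies), and conclude by two-out-of-three. The ``main obstacle'' you flag --- that the quasi-isomorphism $\cpx I_{\oR}\to\cpx D_{\oR}$ must induce quasi-isomorphisms after $\inHom_{\oR}(\overline{\fring{\s}},-)$ --- is exactly the point the paper settles by invoking the explicit subcomplex realization of $\cpx I_{\oR}$ inside $\cpx D_{\oR}$ from \cite{OY}.
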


\begin{proof}
In \cite{OY}, we showed that $\cpx I_{\oR}$ can be seen as a subcomplex of $\cpx D_{\oR}$. 
This gives a quasi-isomorphism $\eta: \cpx I_{\oR} \to \cpx D_{\oR}$ 
such that the induced map $\eta_\s := \inHom_{\oR}(\ofs, \eta) : \cpx I_{\ofs} \to \cpx D_{\ofs}$ is 
a quasi-isomorphism again for all $\s \in \cell$.  

Since $\oR$ is cone-wise normal, $\cpx I_{\oR}$ is a $\zM$-graded subcomplex of $\cpx J_{\oR}$, 
and the chain map $\iota:  \cpx I_{\oR} \hookrightarrow \cpx J_{\oR}$ 
is a quasi-isomorphism as pointed out in the proof of Proposition~\ref{normal case}. 
The diagram $\cpx J_{\oR} \stackrel{\iota}{\longleftarrow} \cpx I_{\oR} \stackrel{\eta}{\longrightarrow}
\cpx D_{\oR}$ gives an isomorphism $\cpx J_{\oR} \to \cpx D_{\oR}$ in $\Db(\Mod R)$.  
Since $\cpx D_{\oR}$ is a complex of injective $R$-modules, there is an actual chain map 
$\psi : \cpx J_{\oR} \to \cpx D_{\oR}$ giving this isomorphism. 
Clearly, $\psi$ is a quasi-isomorphism and $\eta$ is homotopic to $\psi \circ \iota$. 
It is easy to see that $\iota_\s := \inHom_{\oR}(\ofs, \iota): \cpx I_{\ofs} \to \cpx J_{\ofs}$ 
is a quasi-isomorphism, and $\eta_\s$ is homotopic to $\psi_\s \circ \iota_\s$. 
Since $\eta_\s$ and $\iota_\s$ are quasi-isomorphisms, so is $\psi_\s$. 
\end{proof}

Since $\oR$ is finitely generated as an $R$-module, we have $\DC_{\oR} = \inHom_R(\oR, \DC_R)$. 
Via the canonical injection $R \hookrightarrow \oR$, we have a chain map 
$\lambda :\DC_{\oR} = \inHom_R(\oR, \DC_R) \longrightarrow \inHom_R(R,\DC_R)=\DC_R$. 
Similarly, for each $\s \in \cell$, the injection $\fs \hookrightarrow \ofs$ 
induces a chain map $\lambda_\s : \cpx D_{\ofs} \to \cpx D_{\fs}$. 
%Clearly, the following diagram is commutative. 

As a $\ZZ^{\dim \s +1}$-graded version of the well-known fact  
$\cpx D_{\ofs} = \inHom_{\fs}(\ofs, \cpx D_{\fs})$, we have 
$\cpx J_{\ofs} = \inHom_{\fs}(\ofs, \cpx J_{\fs}).$ 
Similarly, we have a chain map $\mu_\s: \cpx J_{\ofs} \to \cpx J_{\fs}$ 
which is the $\ZZ^{\dim \s +1}$-graded version of $\lambda_\s$. 

\begin{lem}\label{commutative diagram}
For the quasi-isomorphism $\psi_\s : \cpx J_{\ofs} \to \cpx D_{\ofs}$, 
we have a quasi-isomorphism $\phi_\s : \cpx J_{\fs} \to \cpx D_{\fs}$ 
which makes the following diagram commutative.
$$
\xymatrix{
J^\bullet_{\ofs} \ar[r]^{\psi_\sigma} \ar[d]_{\mu_\sigma} %\ar@{}[rd]_{\circlearrowright}
& D^\bullet_{\ofs} \ar[d]^{\lambda_\sigma} \\
J^\bullet_{\fs} \ar[r]_{\phi_\sigma} & D^\bullet_{\fs}
}
$$ 
\end{lem}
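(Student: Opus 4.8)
The plan is to descend from the normalization to $\fs$ itself, exploiting that $\fs=\kk[\M_\s]$ --- unlike $R$ --- carries a genuine $\ZZ^{\dim\s+1}$-grading; I work throughout in the category of $\ZZ^{\dim\s+1}$-graded $\fs$-modules. There $\cpx J_{\fs}$ is a bounded complex of graded-injective modules, quasi-isomorphic to $\cpx D_{\fs}$ by Ishida's theorem (Remark~\ref{Jfs}). The two sides of the square not involving $\phi_\s$ are both restriction along the normalization $\fs\hookrightarrow\ofs$: via the identities $\cpx D_{\ofs}=\inHom_{\fs}(\ofs,\cpx D_{\fs})$ and $\cpx J_{\ofs}=\inHom_{\fs}(\ofs,\cpx J_{\fs})$, the maps $\lambda_\s$ and $\mu_\s$ are obtained by applying the functors $\inHom_{\fs}(-,\cpx D_{\fs})$ and $\inHom_{\fs}(-,\cpx J_{\fs})$, respectively, to the inclusion $\fs\hookrightarrow\ofs$.

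First I would record the exactness coming from injectivity. Applying the exact contravariant functors $\inHom_{\fs}(-,\cpx J_{\fs})$ and $\inHom_{\fs}(-,\cpx D_{\fs})$ (exact because the targets are complexes of injectives) to the short exact sequence $0\to\fs\to\ofs\to\ofs/\fs\to0$ yields short exact sequences of complexes
$$0\to\inHom_{\fs}(\ofs/\fs,\cpx J_{\fs})\to\cpx J_{\ofs}\xrightarrow{\mu_\s}\cpx J_{\fs}\to0,$$
together with the analogue with $\cpx D$ in place of $\cpx J$. In particular $\mu_\s$ and $\lambda_\s$ are surjective, with $\ker\mu_\s=\inHom_{\fs}(\ofs/\fs,\cpx J_{\fs})$ and $\ker\lambda_\s=\inHom_{\fs}(\ofs/\fs,\cpx D_{\fs})$. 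Since $\mu_\s$ is a surjection of complexes, a necessarily unique chain map $\phi_\s$ with $\phi_\s\circ\mu_\s=\lambda_\s\circ\psi_\s$ exists exactly when $\lambda_\s\circ\psi_\s$ annihilates $\ker\mu_\s$; I would then \emph{define} $\phi_\s$ as the induced map on the quotient, which gives commutativity of the square on the nose.

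Granting this factorization, $\psi_\s$ becomes a morphism between the two short exact sequences above, so the long exact cohomology sequences and the five lemma reduce the quasi-isomorphism of $\phi_\s$ to that of the middle and left verticals. The middle one is $\psi_\s$, a quasi-isomorphism by hypothesis; the left one is the restriction $\psi_\s|_{\ker\mu_\s}:\inHom_{\fs}(\ofs/\fs,\cpx J_{\fs})\to\inHom_{\fs}(\ofs/\fs,\cpx D_{\fs})$, which I must identify with the comparison obtained by applying $\inHom_{\fs}(\ofs/\fs,-)$ to a quasi-isomorphism $\cpx J_{\fs}\to\cpx D_{\fs}$ of complexes of injectives, and is therefore itself a quasi-isomorphism. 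Thus $\phi_\s$ is a quasi-isomorphism.

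The crux --- and the only non-formal point --- is the inclusion $\lambda_\s\psi_\s(\ker\mu_\s)=0$, equivalently that $\psi_\s$ carries the subcomplex $\inHom_{\fs}(\ofs/\fs,\cpx J_{\fs})$ of homomorphisms vanishing on $\fs$ into its $\cpx D$-counterpart; this inclusion also supplies the identification of the kernel map needed above. It is not formal, since $\psi_\s$ was produced over $\oR$ in the previous lemma and need not visibly have the form $\inHom_{\fs}(\ofs,\Phi)$, for which it would be automatic. I would settle it by the explicit degreewise description: every graded component of the four complexes is at most one-dimensional and $\psi_\s$ is degree-preserving, so it suffices to check that in each degree $c\in\szM$ at which $(\cpx J_{\ofs})_c\to(\cpx J_{\fs})_c$ has nonzero kernel --- that is, $(\cpx J_{\fs})_c=0$ --- one also has $(\cpx D_{\fs})_c=0$, which forces $(\ker\lambda_\s)_c=(\cpx D_{\ofs})_c$ and hence makes the inclusion hold in that degree. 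Establishing this amounts to comparing the degreewise supports of $\cpx D_{\fs}$ and $\cpx J_{\fs}$ --- equivalently, controlling the induced complexes against the containment $\M_\s\subseteq\bM_\s$ --- and this bookkeeping with the semigroup data is where the real work sits.
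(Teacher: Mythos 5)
There is a genuine gap, and it sits exactly where you locate it: the inclusion $\lambda_\s\psi_\s(\Ker\mu_\s)=0$. Everything downstream of that (uniqueness of the induced $\phi_\s$, the morphism of short exact sequences, the five lemma) is fine, but the crux is not established, and the method you sketch for it cannot work. You propose to check it degreewise, using that ``every graded component of the four complexes is at most one-dimensional'' and that $\psi_\s$ is degree-preserving. That is true for $\cpx J_{\fs}$ and $\cpx J_{\ofs}$, but \emph{false} for $\cpx D_{\fs}$ and $\cpx D_{\ofs}$: by the paper's convention these are the ordinary (non-graded) normalized dualizing complexes, i.e.\ complexes of honest injective modules over the noetherian rings $\fs$ and $\ofs$, so they carry no $\ZZ^{\dim\s+1}$-grading and the symbols $(\cpx D_{\fs})_c$, $(\Ker\lambda_\s)_c$ are not defined; nor is $\psi_\s$ a degree-preserving map of graded complexes. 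A second, more structural obstruction: $\psi_\s=\inHom_{\oR}(\ofs,\psi)$ where $\psi$ was produced in the previous lemma only as \emph{some} chain-level representative of a derived-category isomorphism, hence is determined only up to homotopy; whether a given representative annihilates the subcomplex $\inHom_{\fs}(\ofs/\fs,\cpx J_{\ofs}\!\text{-side})$ on the nose is not homotopy-invariant, so no argument that uses only the data you have about $\psi_\s$ can establish strict annihilation of $\Ker\mu_\s$.

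The paper sidesteps this by running the construction in the opposite direction. It picks an arbitrary quasi-isomorphism $\xi:\cpx J_{\fs}\to\cpx D_{\fs}$ (available by Ishida's theorem) and \emph{induces} the top map functorially: since $\cpx J_{\ofs}=\inHom_{\fs}(\ofs,\cpx J_{\fs})$ and $\cpx D_{\ofs}=\inHom_{\fs}(\ofs,\cpx D_{\fs})$, the map $\xi_*=\inHom_{\fs}(\ofs,\xi)$ makes the square with $\mu_\s$, $\lambda_\s$ and $\xi$ commute automatically by naturality, with no kernel condition to verify. It then uses rigidity of the dualizing complex, $\Hom_{\Db(\Mod\ofs)}(\cpx D_{\ofs},\cpx D_{\ofs})=\ofs$ (whose units are the nonzero scalars, as $\ofs$ is a positive affine semigroup ring), to conclude $\psi_\s=c\,\xi_*$ for some $0\neq c\in\kk$, and sets $\phi_\s:=c\,\xi$. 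If you want to salvage your outline, replace the descent along $\mu_\s$ by this ``induce up, then compare by rigidity'' step; the short-exact-sequence/five-lemma machinery you set up then becomes unnecessary, since $\phi_\s=c\,\xi$ is a quasi-isomorphism by construction.
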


\begin{proof}
Let $\xi : \cpx J_{\fs} \to \cpx D_{\fs}$ be an arbitrary quasi-isomorphism. 
Since $\ofs$ is a $\ZZ^{\dim \s +1}$-graded $\fs$-module and $\cpx J_{\fs}$ 
is a complex of $\ZZ^{\dim \s +1}$-graded injective $\fs$-modules, 
$\xi$ gives a quasi-isomorphism 
$$\xi_*: \cpx J_{\ofs} = \inHom_{\fs}(\ofs, \cpx J_{\fs}) \longrightarrow 
\inHom_{\fs}(\ofs, \cpx D_{\fs}) = \cpx D_{\ofs}.$$
Since, as is well-known, 
$\Hom_{\Db(\Mod \ofs)}(\cpx D_{\ofs}, 
\cpx D_{\ofs}) = \ofs$, we have $\psi_\s = c \, \xi_*$ for some $0 \ne c \in \kk$. 
Hence $\phi_\s := c \, \xi$ satisfies the expected condition. 
\end{proof}

For each $i \in \ZZ$,  $J^i_R$ is a $\zM$-graded submodule of 
$J^i_{\oR}$ (here we regard $J^i_{\oR}$ as an $R$-module), moreover, 
$J^i_R$ is a direct summand of $J_{\oR}^i$.  
However  $\cpx J_R$ is NOT a subcomplex 
of $\cpx J_{\oR}$. Let $\kappa: \cpx J_R \dashrightarrow \cpx J_{\oR}$ be 
the component-wise injection  (since this is not a chain map, we use the 
symbol ``$\dashrightarrow$"). For the similar map $\kappa_\s: \cpx J_{\fs} 
\dashrightarrow \cpx J_{\ofs}$, we have $\mu_\s^i \circ \kappa^i_\s = \operatorname{Id}$ for all $i$. 

\begin{lem}\label{composition}
The composition of  
$$\cpx J_R \stackrel{\kappa}{\dashrightarrow} \cpx J_{\oR} \stackrel{\psi}{\longrightarrow} \cpx D_{\oR} 
\stackrel{\lambda}{\longrightarrow} \cpx D_R$$ 
is a chain map. 
\end{lem}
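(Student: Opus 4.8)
The plan is to use that $\psi$ and $\lambda$ are genuine chain maps, so that all the non-commutativity of $\theta := \lambda\circ\psi\circ\kappa$ with the differentials is forced into the failure of $\kappa$ alone. Write $\partial$, $\overline{\partial}$ and $d$ for the differentials of $\cpx J_R$, $\cpx J_{\oR}$ and $\cpx D_R$, and set $\delta^i := \overline{\partial}^i\circ\kappa^i - \kappa^{i+1}\circ\partial^i$, the degreewise defect of $\kappa$. Because $\lambda\circ\psi$ is a chain map, a one-line manipulation gives
$$
d\circ\theta^i - \theta^{i+1}\circ\partial^i
= \lambda^{i+1}\psi^{i+1}\bigl(\overline{\partial}^i\kappa^i-\kappa^{i+1}\partial^i\bigr)
= \lambda^{i+1}\psi^{i+1}\circ\delta^i .
$$
Thus the lemma reduces to the single identity $\lambda\circ\psi\circ\delta=0$, and I would first isolate this.

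Next I would compute $\delta^i$ on the monomial basis. For $t_\s^a\in E_\s(\MM)$ with $a\in\M_\s-\MM$, the component maps for $R$ and for $\oR$ differ only in that the latter additionally retains $t_\t^a$ precisely when $a\in\bM_\t-\bMM$; since $\M_\t-\MM\subseteq\bM_\t-\bMM$, this yields
$$
\delta^i(t_\s^a)=\sum_{\substack{\t\le\s,\ \dim\t=\dim\s-1\\ a\in(\bM_\t-\bMM)\setminus(\M_\t-\MM)}}\e(\s,\t)\,t_\t^a .
$$
Hence every basis vector occurring in the image of $\delta^i$ has degree $a\notin\M_\t-\MM$, so it lies in the complementary direct summand of $\kappa^{i+1}(J^{i+1}_R)$ inside $J^{i+1}_{\oR}$; equivalently $\mu^{i+1}\circ\delta^i=0$, where $\mu^{i+1}$ is the degreewise retraction with $\mu^{i+1}\kappa^{i+1}=\operatorname{Id}$. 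It therefore suffices to prove the cleaner statement that $\lambda\circ\psi$ kills this complementary summand, i.e. that $\lambda\psi(t_\t^a)=0$ whenever $a\in(\bM_\t-\bMM)\setminus(\M_\t-\MM)$; this is the statement $\lambda\psi=\lambda\psi\circ\kappa\mu$.

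The hard part will be this last vanishing, because $\psi$ and $\lambda$ are only available abstractly: $\psi$ comes from the derived-category lifting behind Proposition~\ref{normal case} and Theorem~\ref{sec:ishida}, while $\lambda$ is the restriction along $R\hookrightarrow\oR$ coming from $\cpx D_{\oR}=\inHom_R(\oR,\cpx D_R)$. Since both maps preserve the $\szM$-grading, I would check the vanishing one degree $a$ at a time, noting that in a fixed degree only the finitely many cells $\t$ with $a\in\bM_\t-\bMM$ contribute. The plan is to localize the whole situation at the monomial prime $\p_\t$, where the dualizing complexes reduce to those of $\kk[\t]$ and of its normalization $\ofs$; there the relevant square is exactly the commutative diagram $\lambda_\s\circ\psi_\s=\phi_\s\circ\mu_\s$ of Lemma~\ref{commutative diagram}, and the excess vectors $t_\t^a$ with $a\notin\M_\t-\MM$ sit in $\ker\mu_\s$, hence are annihilated. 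The delicate point is to transport this cell-local vanishing back to the global composite — that is, to argue that the non-normal excess degrees are precisely what the restriction $\lambda$ discards on $\cpx D_R$ — without ever computing $\psi$ explicitly.
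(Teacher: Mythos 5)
Your reduction is correct as far as it goes: the identity $d\circ\theta^i-\theta^{i+1}\circ\partial^i=\lambda^{i+1}\psi^{i+1}\circ\delta^i$, the explicit formula for $\delta^i$ on monomials, and the observation that $\operatorname{Im}\delta^i$ lies in the complement of $\kappa^{i+1}(J^{i+1}_R)$ are all right. But the proof is not finished: the entire content of the lemma has been pushed into the vanishing $\lambda\circ\psi\circ\delta=0$, and for that step you offer only a plan whose crux you yourself flag as unresolved (``the delicate point is to transport this cell-local vanishing back to the global composite''). Moreover, the tool you reach for --- localizing at the monomial prime $\p_\t$ --- is not the right one; the operation that makes everything compatible is $\inHom_R(\fring{\s},-)$, i.e.\ passing to the submodule annihilated by $\p_\s$. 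That is exactly how $\psi_\s$, $\lambda_\s$, $\mu_\s$ are \emph{defined}, so the restriction of the global composite $\lambda\circ\psi$ to the subcomplex $\cpx J_{\ofs}\subset\cpx J_{\oR}$ is $\lambda_\s\circ\psi_\s$ by functoriality, with no transport argument needed. Your missing step then closes as follows: an excess vector $t_\t^a$ with $a\in(\bM_\t-\bMM)\setminus(\M_\t-\MM)$ satisfies $a\in\bM_\t-\bM_\s$ for some $\s\geq\t$, hence lies in $\cpx J_{\ofs}$, where $\lambda_\s\circ\psi_\s=\phi_\s\circ\mu_\s$ (Lemma~\ref{commutative diagram}) kills it because $a\notin\M_\t-\M_\s$ forces $\mu_\s(t_\t^a)=0$.

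The paper avoids your $\delta$-computation altogether by a shorter route: every basis element $x\in J^i_R$ is annihilated by $\p_\s$ for some $\s$, hence lies in the subcomplex $\cpx J_{\fs}=\inHom_R(\fring{\s},\cpx J_R)$; on that subcomplex the full composite $\lambda\circ\psi\circ\kappa$ equals $\lambda_\s\circ\psi_\s\circ\kappa_\s=\phi_\s\circ\mu_\s\circ\kappa_\s=\phi_\s$, which is already a chain map, and since $\cpx J_{\fs}$ is a subcomplex this gives $d\theta(x)=\theta(\partial x)$ directly. So the ingredients you identified (Lemma~\ref{commutative diagram} and the identity $\mu_\s\circ\kappa_\s=\operatorname{Id}$) are the right ones, but your write-up stops one genuine step short of a proof.
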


\begin{proof}
For any  $x \in J^i_R$, there is some $\s \in \cell$ such that $\p_\s x =0$. 
Regarding $\cpx J_{\fs} = \inHom_R(\fs, \cpx J_R)$ as a subcomplex of $\cpx J_R$, 
we have $x \in J^i_{\fs}$. %and $\partial_{\cpx J_{\fs}}(x) \in \cpx J_{\fs}$. 
Since $\cpx J_{\ofs}$ (resp.  $\cpx D_{\ofs}$ and $\cpx D_{\fs}$) can be seen 
as a subcomplex of $J_{\oR}^i$ (resp. $\cpx D_{\oR}$ and $\cpx D_R$), 
we have the following commutative diagram.
$$
\xymatrix{
J^i_{\fs} \ar[r]^{\kappa_\sigma} \ar@{^(->}[d] & J^i_{\ofs} \ar[r]^{\psi_\sigma} \ar@{^(->}[d] &
D^i_{\ofs} \ar[r]^{\lambda_\sigma} \ar@{^(->}[d] & D^i_{\fs}  \ar@{^(->}[d]\\
J^i_R \ar[r]_{\kappa} &J^i_{\oR} \ar[r]_{\psi} & D^i_{\oR} \ar[r]_{\lambda} & D^i_R
}
$$
By Lemma~\ref{commutative diagram}, we have $\lambda^i_\s \circ \psi^i_\s \circ \kappa^i_\s 
= \phi_\s^i \circ \mu^i_\s \circ \kappa^i_\s = \phi_\s^i$. 
Since $\phi_\s$ is a chain map, we are done. 
\end{proof}

We denote the chain map $\cpx J_R \to \cpx D_R$ constructed in Lemma~\ref{composition} by $\phi$. 
To prove Theorem~\ref{main}, we will show that $\phi$
is a quasi-isomorphism by a slightly indirect way.
 
\medskip

For each $a \in \sM$, there is a unique minimal element among the cells 
$\s \in \cell$ such that $a \in \M_\s$. We denote this minimal cell by $\supp(a)$.   

\begin{dfn}
An $R$-module $M \in \Lgr R$ is said to be \term{squarefree} if it is $\MM$-graded 
(i.e., $M=\bigoplus_{a \in \sM} M_a$), finitely generated,  and the multiplication map $M_a \ni  x \mapsto t^b x \in M_{a+b}$  
is bijective for all $a,b\in \sM$ such that $a+b$ exists and 
 $\supp(a+b) = \supp(a)$.
\end{dfn}

The notion of squarefree modules over a normal semigroup ring was introduced by the author (\cite{Y2}), 
and many applications have been found.  In \cite{OY}, squarefree modules over 
a cone-wise normal toric face ring play a key role. 
Contrary to the (cone-wise) normal case, the derived category of squarefree modules is {\it not} closed 
under the duality ${\mathbf R} \Hom_R(-, \DC_R) = \inHom_R(-, \DC_R)$.  
%This is why the author has not considered this notion in the non-normal case before. 
However, these modules still enjoy some nice properties.

\begin{lem}[{cf. \cite[Lemma~4.2]{OY}}]\label{sec:maps}
Let $M \in \Sq R$. 
Then for $a,b \in \sM$ with $\supp(a) \ge \supp(b)$, there exists a $\kk$-linear map
$\varphi^M_{a,b}:M_b \to M_a$ satisfying the following properties:
\begin{enumerate}
\item If $a= b+c$, then $\varphi^M_{a,b}$ coincides with the multiplication map 
$M_b \ni x \mapsto t^c \, x \in M_a$; 
\item $\varphi^M_{a,b}$ is bijective if $\supp(a) = \supp(b)$;
\item $\varphi^M_{a,b}\circ \varphi^M_{b,c} = \varphi^M_{a,c}$
for $a,b,c \in \sM$ with $\supp(c) \le \supp(b) \le \supp(a)$. 
\end{enumerate}
\end{lem}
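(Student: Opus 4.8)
The plan is to define $\varphi^M_{a,b}$ as the composite of one genuine multiplication map with the inverse of another, using the squarefree condition to make the latter invertible. Write $\s := \supp(a)$ and $\t := \supp(b)$, so $\s \ge \t$. The underlying geometric point is that $\supp(a) = \s$ forces $a \in \relint(C_\s)$: by condition (3) in the definition of a monoidal complex, the points of $\M_\s$ lying on a proper face $\i_{\s,\s'}(C_{\s'})$ (with $\s' < \s$) are exactly $\i_{\s,\s'}(\M_{\s'})$, so $a$ lies on no proper face. Since $\t \le \s$, the element $b$ corresponds to a point of the face $\i_{\s,\t}(C_\t)$, hence $b \in C_\s$ and $a + b$ exists in $\M_\s$. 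As $\relint(C_\s) + C_\s \subseteq \relint(C_\s)$, we get $\supp(a+b) = \s = \supp(a)$, so the multiplication map $t^b \colon M_a \to M_{a+b}$ is \emph{bijective} by the definition of a squarefree module. I would then set
$$\varphi^M_{a,b} := (t^b|_{M_a})^{-1} \circ (t^a|_{M_b}) \colon M_b \longrightarrow M_a,$$
equivalently, $\varphi^M_{a,b}(x)$ is the unique $z \in M_a$ with $t^b z = t^a x$ in $M_{a+b}$.

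Next I would dispose of (1) and (2), which are immediate. For (1), suppose $a = b + c$; then $z_0 := t^c x \in M_{b+c} = M_a$ satisfies $t^b z_0 = t^{b+c} x = t^a x$, so $z_0 = \varphi^M_{a,b}(x)$ by the uniqueness above, and $\varphi^M_{a,b}$ is multiplication by $t^c$. For (2), if $\supp(a) = \supp(b)$ then also $\supp(a+b) = \supp(b)$, so the other multiplication map $t^a \colon M_b \to M_{a+b}$ is bijective as well, and $\varphi^M_{a,b}$ is a composite of two bijections.

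The main work is the cocycle identity (3), for $a,b,c$ with $\supp(c) \le \supp(b) \le \supp(a) = \s$. Here all three of $a,b,c$ lie in $\M_\s$ via the face embeddings, so $a+b+c$ exists and, as above, $\supp(a+b+c) = \s$. Fix $x \in M_c$ and put $y := \varphi^M_{b,c}(x)$, $z := \varphi^M_{a,b}(y)$, so that $t^c y = t^b x$ in $M_{b+c}$ and $t^b z = t^a y$ in $M_{a+b}$. Multiplying the first identity by $t^a$ and the second by $t^c$ and passing to $M_{a+b+c}$ gives $t^{a+c} y = t^{a+b} x$ and $t^{b+c} z = t^{a+c} y$, whence $t^b(t^c z) = t^{b+c} z = t^{a+b} x = t^b(t^a x)$. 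Since $\supp(a+c) = \s = \supp(a+b+c)$, the map $t^b \colon M_{a+c} \to M_{a+b+c}$ is again bijective, in particular injective; cancelling $t^b$ yields $t^c z = t^a x$ in $M_{a+c}$, i.e. $z = \varphi^M_{a,c}(x)$, which is exactly $\varphi^M_{a,b} \circ \varphi^M_{b,c} = \varphi^M_{a,c}$.

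The one genuinely delicate point, used throughout, is the translation between the combinatorial condition $\supp(a) = \s$ and the geometric statement $a \in \relint(C_\s)$, combined with $\relint(C_\s) + C_\s \subseteq \relint(C_\s)$; once this is in hand, every multiplication map appearing above is of the bijective type singled out in the definition of a squarefree module, and the three properties follow by elementary diagram chasing. I expect \cite[Lemma~4.2]{OY} to follow the same pattern in the cone-wise normal case, the only new issue being that $\M_\s$ need not equal $C_\s \cap \ZZ^{\dim\s+1}$; but since the argument uses only that $\RR_{\geq 0}\M_\s = C_\s$ and that $\M_\s$ meets each face in $\M_{\s'}$, non-normality causes no difficulty.
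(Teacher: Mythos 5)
Your proposal is correct and follows essentially the same route as the paper: the paper also defines $\varphi^M_{a,b}$ as $(\varphi^M_{a+b,a})^{-1}\circ\varphi^M_{a+b,b}$, i.e.\ as multiplication by $t^a$ followed by the inverse of multiplication by $t^b$, which is invertible because $\supp(a+b)=\supp(a)$. The paper leaves the verification of (1)--(3) and the supporting fact $\supp(a+b)=\supp(a)$ to the reader (citing the normal case in \cite{OY}), whereas you supply these details correctly.
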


\begin{proof}
The proof for the normal case works here, but we repeat it for the reader's 
convenience. Set $\varphi^M_{a+b, b}:M_b \to M_{a+b}$ to be the multiplication map 
$M_b \ni x \mapsto t^a  x \in M_{a+b}$, and   
define $\varphi^M_{a+b,a}: M_a \to M_{a+b}$ in the same way. 
Since $\supp(a)=\supp(a+b)$, $\varphi^M_{a+b,a}$ is bijective and we can put 
$\varphi^M_{a,b}:= (\varphi^M_{a+b, a})^{-1} \circ \varphi^M_{a+b,b}$. 
\end{proof}

Let $\Sq R$ be the full subcategory of $\Lgr R$ consisting of squarefree modules. 
By virtue of the above lemma, \cite[Lemma~4.2]{OY} remains true in the present case.  

\begin{lem}[{c.f. \cite[Lemma~4.2]{OY}}]
The category $\Sq R$ is equivalent to the category of finitely generated left $\Lambda$-modules,  
where $\Lambda$ is the incidence algebra of the poset $\cell$ over $\kk$. 
Hence $\Sq R$ is an abelian category with enough injectives, 
and indecomposable injectives are objects isomorphic to $\fs$ for some $\s \in \cell$.  
The injective dimension of any object is at most $d$.
\end{lem}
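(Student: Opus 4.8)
The plan is to reduce all three assertions to a single equivalence of categories, after which they become standard facts about the module category of a finite-dimensional algebra. The whole point of Lemma~\ref{sec:maps} is that it supplies, in the present non-normal setting, exactly the combinatorial data (the transport maps $\varphi^M_{a,b}$) that drives the proof of \cite[Lemma~4.2]{OY} in the cone-wise normal case; so I expect the argument to parallel that proof, with $\varphi^M_{a,b}$ playing the role of the multiplication maps used there.

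First I would construct a functor $F : \Sq R \to \mdL$, where $\mdL$ is the category of finitely generated left $\Lambda$-modules. For $M \in \Sq R$ and $\s \in \cell$, choose any $a \in \sM$ with $\supp(a) = \s$ (such an $a$ exists, e.g. a sum of generators of $\M_\s$) and set $N_\s := M_a$; by Lemma~\ref{sec:maps}(2) this is independent of the choice up to the canonical isomorphisms $\varphi^M_{a',a}$. For $\t \le \s$, the map $\varphi^M_{a,b}$ with $\supp(a) = \s$, $\supp(b) = \t$ gives a $\kk$-linear map $N_\t \to N_\s$, and Lemma~\ref{sec:maps}(3) makes these compatible with composition. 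Thus $(N_\s)_{\s \in \cell}$ is a representation of the poset $\cell$, i.e. a left module over its incidence algebra $\Lambda$; it is finite-dimensional because $\cell$ is finite and each $N_\s$ is finite-dimensional (finite generation of $M$). Conversely, a functor $G$ in the other direction sends a representation $(N_\s)$ to the $\MM$-graded module $M$ with $M_a := N_{\supp(a)}$, the action of $t^b$ on $M_a$ being the structure map $N_{\supp(a)} \to N_{\supp(a+b)}$ when $a+b$ exists and $0$ otherwise; here one uses the face property of the cones $C_\s$ to see that $\supp(a) \le \supp(a+b)$ whenever $a+b$ exists, so that such a structure map is indeed available. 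The squarefree axiom for $G(N)$ and the identities $FG \cong \idmap$, $GF \cong \idmap$ are then checked exactly as in \cite[Lemma~4.2]{OY}, so that $\Sq R \simeq \mdL$.

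Granting the equivalence, the remaining claims are formal. Since $\cell$ is finite, $\Lambda$ is a finite-dimensional $\kk$-algebra, so $\mdL$ is abelian with enough injectives, and its indecomposable injectives are the injective envelopes of the simple modules, one for each $\s \in \cell$. Tracing the indecomposable injective at $\s$ back through $G$ yields the module whose $\t$-component is $\kk$ for $\t \le \s$ and $0$ otherwise, which is exactly $\fs = R/\p_\s$; hence the indecomposable injectives of $\Sq R$ are the $\fs$. Finally, a minimal injective coresolution of any object is controlled by the chains of $\cell$, and the longest chain $\emptyset = \s_0 < \s_1 < \cdots < \s_d$ has length $d$ (equivalently $\dim \fs$ runs over $0,1,\dots,d$), which bounds the injective dimension by $d$. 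The main obstacle is, in fact, entirely absorbed into Lemma~\ref{sec:maps}: without the maps $\varphi^M_{a,b}$ there is no way to compare the graded pieces $M_a$ and $M_b$ with $\supp(a) \ge \supp(b)$ across non-normal cones, and it is precisely this comparison that the incidence-algebra description encodes; once those maps are in hand, verifying that $F$ and $G$ are mutually inverse and computing the injectives is routine bookkeeping paralleling the normal case.
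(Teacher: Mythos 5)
Your proposal is correct and matches the paper's approach: the paper gives no proof beyond the remark that Lemma~\ref{sec:maps} supplies the transport maps $\varphi^M_{a,b}$ needed for the argument of \cite[Lemma~4.2]{OY} to carry over, and your construction of the mutually inverse functors via those maps (together with the standard facts about incidence algebras of finite posets) is exactly that argument spelled out.
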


Recall that if $M$ is a $\zM$-graded $R$-module, then the localization $T_\s^{-1}M$ is also.  
Since $\cpx L_R$ is a complex of flat $R$-modules,  $(- \otimes_R \cpx L_R)$ gives an exact functor 
$\Db(\Lgr R) \to \Db(\Lgr R)$.  Composing this one and the Matlis duality, we have an exact  
functor  $(- \otimes_R \cpx L_R)^\vee: \Db(\Lgr R) \to \Db(\Lgr R)^\op$. 

Let $\InjSq$ be the full subcategory of $\Sq R$ consisting of all injective objects, that is, 
finite direct sums of $\kk[\s]$ for various $\s \in \cell$. 
As is well-known (cf. \cite[Proposition~I.4.7]{Ha}), the bounded homotopy category $\Cb(\InjSq)$ 
is equivalent to $\Db(\Sq R)$. It is easy to see that the functor 
$(- \otimes_R \cpx L_R)^\vee: \Db(\Sq R) \to \Db(\Lgr R)^\op$ can be identified with  
$\inHom_R(-, \cpx J_R): \Cb(\InjSq) \to \Db(\Lgr R)^\op$ by Lemma~\ref{Hom(s,J)}. 
Via the forgetful functor $\Lgr R \to \Mod R$, we get an exact functor 
$$\inHom_R(-, \cpx J_R): \Cb(\InjSq) \to \Db(\Mod R)^\op.$$ 

Since $\cpx D_R$ is a complex of injective $R$-modules, $\inHom_R(-, \cpx D_R)$ 
gives an exact functor $\Db(\Mod R) \to \Db(\Mod R)^\op$. Similarly, 
we have an  exact functor 
$$\inHom_R(-, \cpx D_R): \Cb(\InjSq) \to \Db(\Mod R)^\op.$$
The chain map $\phi: \cpx J_R \to \cpx D_R$ gives a natural transformation 
$$\Phi: \inHom_R(-, \cpx J_R) \to \inHom_R(-, \cpx D_R).$$

\begin{thm}\label{Phi}
The natural transformation $\Phi$ is an natural isomorphism. 
\end{thm}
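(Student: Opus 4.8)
The plan is to use that $\Phi$ is a natural transformation between two \emph{exact} (i.e.\ triangulated) functors on $\Cb(\InjSq)$, so that the full subcategory of objects $M$ for which $\Phi_M$ is an isomorphism is a triangulated subcategory closed under direct summands. Hence it is enough to check that $\Phi_M$ is an isomorphism for $M$ running over a set of triangulated generators of $\Cb(\InjSq)$. Since every object of $\InjSq$ is a finite direct sum of the indecomposable injectives $\fs$, and every bounded complex is obtained from its stalks by iterated mapping cones (via the stupid truncations), the stalk complexes $\fs$ concentrated in a single cohomological degree generate $\Cb(\InjSq)$ as a triangulated category. The formal reduction is standard: in a distinguished triangle, if $\Phi$ is invertible at two of the three vertices then the five-lemma (long exact sequence) argument in the target makes it invertible at the third, and invertibility is trivially preserved under shifts and retracts. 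I am therefore reduced to proving that
\[
\Phi_{\fs}\colon \inHom_R(\fs, \cpx J_R) \longrightarrow \inHom_R(\fs, \cpx D_R)
\]
is an isomorphism in $\Db(\Mod R)$ for each $\s \in \cell$.

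The core of the argument is to identify $\Phi_{\fs}$ with the quasi-isomorphism $\phi_\s$ already constructed. By Lemma~\ref{Hom(s,J)} (and the complex-level statement following it) we have $\inHom_R(\fs, \cpx J_R) \cong \cpx J_{\fs}$, while $\inHom_R(\fs, \cpx D_R) = \cpx D_{\fs}$ because $\fs = R/\p_\s$ is a quotient of $R$. Applying $\inHom_R(\fs,-)$ to the chain map $\phi = \lambda \circ \psi \circ \kappa$ amounts to restricting $\phi$ to the $\p_\s$-torsion part, and the commutative square displayed in the proof of Lemma~\ref{composition} shows that this restriction is exactly $\lambda^i_\s \circ \psi^i_\s \circ \kappa^i_\s$. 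As computed there, $\lambda^i_\s \circ \psi^i_\s \circ \kappa^i_\s = \phi_\s^i \circ \mu^i_\s \circ \kappa^i_\s = \phi_\s^i$, using $\mu_\s \circ \kappa_\s = \operatorname{Id}$. Hence $\Phi_{\fs}$ is identified with $\phi_\s\colon \cpx J_{\fs} \to \cpx D_{\fs}$, which is a quasi-isomorphism by Lemma~\ref{commutative diagram}; in particular $\Phi_{\fs}$ is an isomorphism, which settles the reduced statement and hence the theorem.

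I expect the delicate point to be precisely this identification. Because $\phi$ was \emph{not} obtained as a term-wise chain map on $\cpx J_R$ --- the component $\kappa$ is only a component-wise injection and is not itself a chain map --- the claim that $\inHom_R(\fs,\phi)$ agrees with the separately constructed $\phi_\s$ depends on the compatibility diagram threading $\cpx J_{\ofs}$, $\cpx D_{\ofs}$ and $\cpx D_{\fs}$ through the normalizations. The care needed is to verify that the vertical inclusions in that diagram really realize $\inHom_R(\fs,-)$ as passage to the $\p_\s$-torsion subcomplex, simultaneously and compatibly for $\cpx J$ and $\cpx D$; once this bookkeeping is in place, the remainder is formal.
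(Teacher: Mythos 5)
Your proposal is correct and follows essentially the same route as the paper: the paper invokes Hartshorne's Proposition I.7.1 for exactly the reduction to the indecomposable injectives $\fs$ that you spell out via stalk complexes and the five lemma, and then, just as you do, identifies $\Phi(\fring{\s})$ with the quasi-isomorphism $\phi_\s$ through the factorization $\lambda_\s \circ \psi_\s \circ \kappa_\s = \phi_\s \circ \mu_\s \circ \kappa_\s = \phi_\s$ established in the proof of Lemma~\ref{composition}. Your closing remark about the delicate point --- that $\kappa$ is only a component-wise map and the identification rests on the compatibility diagram through the normalizations --- is precisely the content the paper delegates to Lemmas~\ref{commutative diagram} and~\ref{composition}.
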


\begin{proof}
By virtue of \cite[Proposition 7.1]{Ha}, it suffices to show that 
$\Phi(\kk[\s]): \cpx J_{\fring{\s}} = \inHom_R(\fring{\s}, \cpx J_R) \to \inHom_R(\fring{\s}, \cpx D_R) = \cpx D_{\fring{\s}}$  
is quasi-isomorphism for all $\s \in \cell$.    Since $\Phi(\kk[\s]) = \inHom_R(\fring{\s}, \phi)$, it is factored as 
$$\cpx J_{\fring{\s}} \stackrel{\kappa_\s}{\dashrightarrow} \cpx J_{\overline{\fring{\s}}} 
\stackrel{\psi_\sigma}{\longrightarrow} \cpx D_{\overline{\fring{\s}}} 
\stackrel{\lambda_\s}{\longrightarrow} \cpx D_{\fring{\s}},$$
while $\kappa_\s$ is just a ``component-wise map".  As shown in the proof of Lemma~\ref{composition}, 
this coincides with the quasi-isomorphism $\phi_\s$ of Lemma~\ref{commutative diagram}.  
\end{proof}

\noindent{\it The proof of Theorem~\ref{main}.}
The theorem follows from Theorem~\ref{Phi}. In fact, $R \in \Sq R$ and 
$\phi: \cpx J_R \to \cpx D_R$ coincides with 
$\Phi(R): \inHom_R(R, \cpx J_R) \to \inHom_R(R, \cpx D_R)$. 
\qed

\begin{cor}
$R$ is Cohen-Macaulay  if and only if so is the local ring $R_\m$. 
\end{cor}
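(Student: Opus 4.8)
The plan is to prove the nontrivial implication: if $R_\m$ is Cohen--Macaulay, then $R$ is Cohen--Macaulay. The reverse implication is immediate, since the Cohen--Macaulay property passes to every localization, in particular to $R_\m$. The whole argument rests on Theorem~\ref{main}, which identifies the normalized dualizing complex $\cpx D_R$ with the explicit complex $\cpx J_R = (\cpx L_R)^\vee$; I would use it in the form of a local duality isomorphism relating the cohomology of $\cpx D_R$ to the local cohomology $H^i_\m(R)$.

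First I would record the relevant local cohomology computation. By the C\v ech complex property of $\cpx L_R$ (the Proposition preceding Theorem~\ref{main}), applied to $M=R$, one has $H^i_\m(R)\cong H^i(\cpx L_R)$. Since the $\zM$-graded Matlis dual $(-)^\vee$ is exact and faithful (it is the degreewise $\kk$-dual), Theorem~\ref{main} yields
$$H^{-i}(\cpx D_R)\;\cong\;H^{-i}(\cpx J_R)\;=\;H^{-i}\big((\cpx L_R)^\vee\big)\;\cong\;\big(H^i(\cpx L_R)\big)^\vee\;\cong\;\big(H^i_\m(R)\big)^\vee.$$
In particular $H^{-i}(\cpx D_R)=0$ if and only if $H^i_\m(R)=0$. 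Next I would observe that each $H^i_\m(R)$ is $\m$-power torsion, so it localizes to $H^i_{\m R_\m}(R_\m)$ at $\m$ and vanishes at every other prime (every $s\in\m$ outside a prime $\p\neq\m$ becomes a unit after localizing); hence $H^i_\m(R)=0$ exactly when $H^i_{\m R_\m}(R_\m)=0$. Now if $R_\m$ is Cohen--Macaulay of dimension $n=\dim R_\m$, then $H^i_{\m R_\m}(R_\m)=0$ for every $i\neq n$, so the displayed isomorphism shows $H^{-i}(\cpx D_R)=0$ for all $i\neq n$. Thus $\cpx D_R$ has cohomology concentrated in the single degree $-n$.

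Finally I would deduce that $R$ is Cohen--Macaulay from the concentration of $\cpx D_R$ in one degree. For an arbitrary prime $\p$, the localization $(\cpx D_R)_\p$ is a dualizing complex of $R_\p$, and since localization is exact it again has cohomology concentrated in at most one degree; by the standard characterization, a Noetherian local ring whose dualizing complex is concentrated in a single cohomological degree is Cohen--Macaulay, so $R_\p$ is Cohen--Macaulay. As this holds for every $\p\in\operatorname{Spec}R$, the ring $R$ is Cohen--Macaulay. The main point to handle carefully is the local duality isomorphism above together with the compatibility of $\cpx D_R$ with localization (a shift of degree is harmless, as only concentration in \emph{one} degree is needed); granting the machinery already built for Theorem~\ref{main}, the remainder is a formal descent from the distinguished maximal ideal $\m$ to all of $\operatorname{Spec}R$.
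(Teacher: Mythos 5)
Your proof is correct, and while it leans on Theorem~\ref{main} exactly as the paper does, the mechanism by which you reduce everything to the maximal ideal $\m$ is genuinely different. The paper argues directly on the cohomology of the dualizing complex: $H^i(\cpx J_R)\cong H^i(\cpx D_R)$ is a finitely generated $\zM$-graded module, so by Lemma~\ref{associated prime} every associated prime is a monomial prime $\p_\s\subset\m$; hence $H^i(\cpx J_R)\neq 0$ forces $\m\in\operatorname{Supp}H^i(\cpx J_R)$, and vanishing can be tested after localizing at $\m$, where $(\cpx J_R)_\m$ becomes a dualizing complex of $R_\m$. You instead dualize back: using exactness and faithfulness of the degreewise $\kk$-dual you identify $H^{-i}(\cpx D_R)$ with $\bigl(H^i_\m(R)\bigr)^\vee$, and then exploit that local cohomology is $\m$-power torsion, hence supported only at $\{\m\}$ --- which makes the ``detected at $\m$'' step immediate and lets you dispense with Lemma~\ref{associated prime} altogether. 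A further small advantage of your route is that you only invoke the dualizing-complex criterion in its robust direction (cohomology concentrated in a single degree implies every localization is Cohen--Macaulay), together with the trivial converse that Cohen--Macaulayness passes to localizations; the paper's opening equivalence ``$R$ is CM iff $H^i(\cpx J_R)=0$ for $i\neq -d$'' implicitly packages an equidimensionality point that your argument never needs to address. The trade-off is that the paper's lemma on associated primes of $\zM$-graded modules is of independent use, whereas your argument is self-contained but specific to this corollary.
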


\begin{proof} By Theorem~\ref{main}, 
$R$ is Cohen-Macaulay if and only if $H^i(\cpx J_R) = 0$ for all $i \ne -d$. 
Since $H^i(\cpx J_R)$ is $\zM$-graded, $H^i(\cpx J_R) \ne 0$ implies 
$\m \in \operatorname{Supp} (H^i (\cpx J_R))$ by Lemma~\ref{associated prime} below. 
Hence $R$ is Cohen-Macaulay, if and only if $H^i(\cpx J_R) \otimes R_\m = 0$ for all $i \ne -d$, 
if and only if $R_\m$ is Cohen-Macaulay. 
\end{proof}

\begin{lem}\label{associated prime}
If $M \in  \Lgr R$ is finitely generated, 
then any associated prime of $M$ is of the form $\p_\s$ for some $\s \in \cell$.
\end{lem}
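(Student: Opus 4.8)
The plan is to prove that any associated prime $\p$ of a finitely generated $M \in \Lgr R$ is monomial, since we already know from the preliminaries that every monomial prime ideal of $R$ equals $\p_\s$ for some $\s \in \cell$. Recall that $\p = \Ann_R(x)$ for some element $x \in M$. First I would use the $\zM$-grading: decompose $x = \sum_{i} x_i$ into its homogeneous components $x_i \in M_{a_i}$ with distinct degrees $a_i \in \szM$. The key idea is to replace $x$ by a suitable homogeneous element generating an associated prime, or more directly to show that $\p$ must itself be a $\zM$-graded (equivalently, monomial) ideal.

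\medskip

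The main step is to reduce to the homogeneous case. Following the standard argument for graded rings, I would argue that for a $\zM$-graded module the associated primes are graded. Concretely, suppose $\p = \Ann(x)$; I claim $\p$ is generated by the monomials it contains. Take any $f = \sum_{a} c_a t^a \in \p$ (finite sum, $c_a \in \kk^\times$); I want to show each $t^a \in \p$, i.e. $t^a x = 0$. Decomposing $x$ into homogeneous pieces and comparing degrees in the relation $f x = 0$, the products $t^a x_b$ that survive land in $M$ in degrees $a + b$ (whenever $a+b$ exists), and because $R$ is a monomial algebra these monomial actions either vanish or shift to a single well-defined degree. The subtlety here is that $R$ is not graded in the usual sense and $a+b$ need not exist for all pairs, so I cannot simply separate degrees by a group grading; instead I would work cell-by-cell, restricting to a fixed $\ZZ\M_\s$ where addition is genuinely defined, and use that within each such lattice the grading is an honest $\ZZ^{\dim \s +1}$-grading.

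\medskip

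More precisely, the cleanest route is: given $x$ with $\Ann(x) = \p$ prime, I would pass to a single homogeneous component. Since $M$ is finitely generated and $\zM$-graded, the submodule $Rx$ is too; among the finitely many degrees appearing in a homogeneous generating element, I would localize or restrict attention to one cell $\s = \supp$ of a suitable degree so that the relevant monomial multiplications behave like an affine-semigroup-ring grading. On that piece, the classical fact that associated primes of $\ZZ^n$-graded modules over affine semigroup rings are graded (monomial) applies, yielding that $\p$ contains, and is generated by, monomials $t^a$. Then $\p$ is a monomial prime, hence $\p = \p_\s$ for the corresponding cell.

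\medskip

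The hard part will be handling the failure of $R$ to be globally graded: the relation ``$a+b$ exists'' is only partial, so the usual degree-comparison trick that isolates homogeneous components of $fx = 0$ must be executed within a single sublattice $\ZZ\M_\s$ rather than across all of $\szM$. I expect the main obstacle to be verifying that after isolating the homogeneous component $x_a$ of $x$ in a minimal degree (with respect to $\supp$), one still has $\Ann(x_a) = \p$, or at least an associated prime of the desired monomial form; the partial-monoid structure means I must check that monomials $t^c$ with $c$ outside $\ZZ\M_{\supp(a)}$ automatically annihilate $x_a$ and hence lie in $\p = \p_{\supp(a)}$ by construction of the monomial prime. Once the associated prime is shown to be monomial, identifying it with some $\p_\s$ is immediate from the preliminaries.
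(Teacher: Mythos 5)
Your overall target---reduce to a setting with an honest $\ZZ^n$-grading and then invoke the classical fact that associated primes of graded modules over affine semigroup rings are monomial---is the right one, but the reduction you sketch does not go through, and you flag the exact point of failure without supplying the missing idea. The specific claim in your last paragraph is false: a monomial $t^c$ with $c \notin \ZZ\M_{\supp(a)}$ need \emph{not} annihilate the homogeneous component $x_a$, since $t^c x_a$ can be nonzero whenever $a$ and $c$ lie in a common lattice $\ZZ\M_\u$ for a cell $\u \geq \supp(a)$ (with $\u$ possibly strictly larger, so that $c$ lies outside $\ZZ\M_{\supp(a)}$). More generally, restricting attention to the single cell $\supp(a)$ of one degree of $x$ cannot isolate the situation: the monomials acting nontrivially on $x$ range over all cells comparable to the degrees of $x$, and the usual degree-separation argument applied to $fx=0$ has no ambient group in which to compare degrees, precisely because addition in $\szM$ is only partially defined.

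The paper's proof supplies the one observation you are missing: since $\p$ is prime it contains a minimal prime of $R$, and the minimal primes of $R$ are exactly the $\p_\t$ for \emph{maximal} cells $\t \in \cell$. Fixing such a $\t$ with $\p_\t \subset \p = \operatorname{Ann}(x)$, one has $\p_\t x = 0$, so $x$ lies in the submodule $M' = \{\, y \in M \mid \p_\t y = 0 \,\}$, which is an honest $\ZZ^{\dim \t +1}$-graded module over the affine semigroup ring $\kk[\t] = \kk[\M_\t]$; all monomials outside $\M_\t$ already lie in $\p$, so nothing is lost in passing to $M'$. Now the classical graded argument applies verbatim to $\bar{\p} = \p/\p_\t$ over $\kk[\t]$, showing that $\p$ is a monomial prime and hence $\p = \p_\s$ for some $\s \leq \t$. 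Without this step your plan remains a plan: you never identify which single cell controls $\p$, and the ``minimal degree component'' trick cannot be executed in the merely partial grading by $\szM$.
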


\begin{proof}
Let $\p$ be an associated prime of $M$.  
Since any minimal prime of $R$ is of the form $\p_\t$ for a maximal cell $\t \in \cell$  
(see \cite{OY}), there is some $\t \in \cell$ with $\p_\t \subset \p$. 
The submodule $M':=\{ \, y \in M \mid \p_\t y =0 \, \}$ of $M$ is a $\ZZ^{\dim \t +1}$-graded 
$\kk[\t]$-module, and the image $\bar{\p}$ of $\p$ in $\kk[\t]$ 
is an associated prime of $M'$. Hence $\p$ is  $\ZZ^{\dim \t +1}$-graded, and $\p=\p_\s$ 
for some $\s \in \cell$ with $\s \leq \t$. 
\end{proof}

%
%
%---body---
%

%
%
%=======================================================
%
%
\end{document}